\theoremstyle{plain}
\newtheorem{theorem}{Theorem}[section]
\newtheorem{lemma}[theorem]{Lemma}
\newtheorem{proposition}[theorem]{Proposition}
\newtheorem{corollary}[theorem]{Corollary}
\newtheorem{definition}[theorem]{Definition}
\newcommand{\rp}{(\mathbb{R}^n,\beta)}
\newcommand{\rpm}{\mathbb{R}^n,\beta}
\newcommand{\rsq}{\mathbb{R}^n}
\newcommand{\mf}{\mathfrak}
\newcommand{\mi}{\mathit}
\newcommand{\Title}{Undecidable First-Order Theories of Affine Geometries}
\newcommand{\Author}{
Antti Kuusisto%
\thanks{University of Tampere, \{antti.j.kuusisto, jonni.virtema\}@uta.fi}
, Jeremy Meyers%
\thanks{Stanford University, jjmeyers@stanford.edu}
, Jonni Virtema%
\footnotemark[2]
}
\begin{document}

\title{\Title\thanks{This work was partially supported by grant 129761 of the Academy of Finland.}}
\author{\Author}

\maketitle

\begin{abstract}
Tarski initiated a logic-based approach to formal geometry that studies
first-order structures with a ternary \emph{betweenness} relation ($\beta$) and a quaternary
\emph{equidistance} relation $(\equiv)$. Tarski established, inter alia, that
the first-order ($\mathrm{FO}$) theory of $(\mathbb{R}^2,\beta,\equiv)$ is decidable.
Aiello and van Benthem (2002) conjectured that the $\mathrm{FO}$-theory of expansions of $(\mathbb{R}^2,\beta)$ with unary predicates
is decidable. We refute this conjecture by showing that for all $n\geq 2$, the $\mathrm{FO}$-theory of monadic expansions of $\rp$ is $\Pi_1^1$-hard
and therefore not even arithmetical.
We also define a natural and comprehensive class $\mathcal{C}$ of geometric structures $(T,\beta)$, where $T\subseteq \rsq$, and show that
for each structure $(T,\beta)\in\mathcal{C}$, the $\mathrm{FO}$-theory of the class of monadic expansions of $(T,\beta)$ is undecidable.
We then consider classes of expansions of structures $(T,\beta)$ with restricted unary 
predicates, for example finite predicates, 
and establish a variety of related undecidability results.
In addition to decidability questions, we
briefly study the expressivity of universal $\mathrm{MSO}$ and weak universal $\mathrm{MSO}$ over
expansions of $(\mathbb{R}^n,\beta)$.
While the logics are incomparable in general, over expansions of $(\mathbb{R}^n,\beta)$, formulae of weak universal $\mathrm{MSO}$
translate into equivalent formulae of universal $\mathrm{MSO}$.
\end{abstract}

\section{Introduction}
Decidability of theories of (classes of) structures is a
central topic in various different fields of computer science and
mathematics, with different motivations and objectives
depending on the field in question.
In this article we investigate formal theories of \emph{geometry} in the
framework introduced by Tarski \cite{Tarski:1948, TarskiGivant:1999}.
The logic-based framework was originally presented in a series of lectures 
given in Warsaw in the 1920's.
The system is based on first-order structures with two
predicates: a ternary \emph{betweenness} relation $\beta$ and a quaternary
\emph{equidistance} relation $\equiv$.
Within this system, $\beta(u,v,w)$ is interpreted to mean that the point
$v$ is between the points $u$ and $w$,
while $x y\equiv u v$ means that the
distance from $x$ to $y$ is equal to the distance from $u$ to $v$.
The betweenness relation $\beta$
can be considered to simulate the action of a ruler,
while the equidistance relation $\equiv$ simulates the action of a compass.
See \cite{TarskiGivant:1999} for information about the
history and development of Tarski's geometry. 

Tarski established in \cite{Tarski:1948}
that the first-order theory of
$(\mathbb{R}^2,\beta,\equiv)$ is decidable.
In \cite{vanBenthemAiello:2002}, Aiello and van Benthem
pose the question: ``\emph{What is the complete monadic $\Pi^1_1$ theory of the affine real plane}?''
By \emph{affine real plane}, the authors refer to the structure $(\mathbb{R}^2,\beta)$.
The monadic $\Pi_1^1$-theory of $(\mathbb{R}^2,\beta)$ is of course
essentially the same as the first-order theory of the class of
expansions $(\mathbb{R}^2,\beta,(P_i)_{i\in\mathbb{N}})$
of the the affine real plane $(\mathbb{R}^2,\beta)$ by unary predicates $P_i\subseteq\mathbb{R}^2$.
Aiello and van Benthem conjecture that the theory is decidable. Expansions of $(\mathbb{R}^2,\beta)$
with \emph{unary} predicates are especially relevant in
investigations related to the geometric structure $(\mathbb{R}^2,\beta)$, since in this 
context unary predicates correspond to 
\emph{regions} of the plane $\mathbb{R}^2$.

In this article we study structures of the type
of $(T,\beta)$, where $T\subseteq\mathbb{R}^n$ and $\beta$ is the
canonical Euclidean betweenness predicate restricted to $T$, see
Section \ref{structures} for the
formal definition. Let $E\bigl((T,\beta)\bigr)$
denote the class of expansions $(T,\beta,(P_i)_{i\in\mathbb{N}})$
of $(T,\beta)$ with unary predicates.
We identify a significant collection of canonical structures $(T,\beta)$
with an undecidable first-order theory of $E\bigl((T,\beta)\bigr)$.
Informally, if there exists a flat two-dimensional
region $R\subseteq\mathbb{R}^n$, no matter how small,
such that $T\cap R$ is in a certain sense sufficiently dense with respect to $R$, then the
first-order theory of the class $E\bigl((T,\beta)\bigr)$ is undecidable.
If the related density conditions are satisfied, we say that \emph{$T$ extends
linearly in $2D$}, see Section \ref{structures} for the formal definition.
We prove that for any $T\subseteq\mathbb{R}^n$, if $T$ extends linearly in $2D$,
then the $\mathrm{FO}$-theory of $E\bigl((T,\beta)\bigr)$ is $\Sigma^0_1$-hard.
In addition, we establish that for all $n\geq 2$,
the first-order theory of $E\bigl((\mathbb{R}^n,\beta)\bigr)$ is $\Pi_1^1$-hard,
and therefore not even arithmetical.
We thereby refute the conjecture
of Aiello and van Benthem from \cite{vanBenthemAiello:2002}.
The results are ultimately based on tiling arguments. The result establishing $\Pi_1^1$-hardness
relies on the \emph{recurrent tiling problem} of Harel \cite{Harel:1985}---once
again demonstrating the usefulness of Harel's methods.
Our results establish undecidability for a wide range of monadic expansion classes of natural geometric
structures $(T,\beta)$.
In addition to  $(\mathbb{R}^2,\beta)$, such
structures include for example the rational plane $(\mathbb{Q}^2,\beta)$,
the real unit cube $([0,1]^3,\beta)$, and the plane of
algebraic reals $(\mathbb{A}^2,\beta)$ --- to name a few.
In addition to investigating monadic expansion classes of the type $E\bigl((T,\beta)\bigr)$, we also
study classes of expansions with \emph{restricted} unary predicates. Let $n$ be a positive integer and let $T\subseteq \rsq$.
Let $F\bigl((T,\beta)\bigr)$ denote the class of structures $(T,\beta,(P_i)_{i\in\mathbb{N}})$, where the sets $P_i$ are
\emph{finite} subsets of $T$. We establish that 
if $T$ extends linearly in $2D$, then the first-order theory of $F\bigl((T,\beta)\bigr)$
is undecidable. An alternative
reading of this result is that the \emph{weak} universal monadic second-order theory of $(T,\beta)$ is undecidable.
We obtain a $\Pi_1^0$-hardness result by an argument based on the
\emph{periodic torus tiling problem} of Gurevich and Koryakov \cite{GurevichKoryakov:1972}.
The torus tiling argument  
can easily be adapted to deal with various different kinds of 
natural classes of expansions of geometric structures $(T,\beta)$ with restricted unary
predicates.
%
%
These include the classes with
unary predicates denoting---for example---polygons,
finite unions of closed rectangles, and
real algebraic sets (see \cite{realalgebraicgeometry} for the definition).
Our results could turn out useful in investigations concerning logical
aspects of spatial databases. It turns out that there is a
canonical correspondence between $(\mathbb{R}^2,\beta)$ and
$(\mathbb{R},0,1,\cdot,+,<)$,
see \cite{Gyssens:1999}. 
See the survey $\cite{KujpersVandenBussche}$ for further details on logical
aspects of spatial databases.
The betweenness predicate is also studied in spatial logic \cite{handbookofspatial}. The recent years 
have witnessed a significant increase in the research on spatially motivated logics.
Several interesting systems
with varying motivations have been investigated, see for
example the articles
\cite{vanBenthemAiello:2002, Balbianietal:1997, Balbiani and Goranko:2002,
HodkinsonHussain, KoPrWoZa:2010, Nenov:2010, Shere:2010,Tinchev,Venema:1999}.
See also the surveys \cite{AielloPrattHartmannvanBenthem:2007} and \cite{Balbianietal:2007} in the
Handbook of Spatial Logics
\cite{handbookofspatial}, and the Ph.D. thesis \cite{Griffiths:2008}.
Several of the above articles investigate
fragments of first-order
theories by way of modal logics for
affine, projective, and metric geometries. Our results contribute to the 
understanding of spatially motivated first-order languages, and hence they can be useful in the
search for decidable (modal) spatial logics.
In addition to studying issues of decidability, we briefly 
compare the expressivities of universal monadic second-order logic $\forall\mathrm{MSO}$ 
and weak universal monadic second-order logic $\forall\mathrm{WMSO}$.
It is straightforward to observe that in general,
the expressivities of $\forall\mathrm{MSO}$ and $\forall\mathrm{WMSO}$ are incomparable in a rather 
strong sense: $\forall\mathrm{MSO}\not\leq\mathrm{WMSO}$ and $\forall\mathrm{WMSO}\not\leq\mathrm{MSO}$.
Here $\mathrm{MSO}$ and $\mathrm{WMSO}$ denote monadic second-order logic and weak monadic second-order logic,
respectively. The result $\forall\mathrm{WMSO}\not\leq\mathrm{MSO}$ follows from already existing
results (see \cite{tenCate:2011} for example), and the result $\forall\mathrm{MSO}\not\leq\mathrm{WMSO}$ is
more or less trivial to prove.
While $\forall\mathrm{MSO}$ and $\forall\mathrm{WMSO}$ are
incomparable in general,
the situation changes when we consider
expansions $(\rpm,(R_i)_{i\in I})$ of the stucture $\rp$, i.e., structures
embedded in the geometric structure $\rp$. Here $(R_i)_{i\in I}$ is an arbitrary vocabulary and $I$ an
arbitrary related index set. We show that over such structures,
sentences of $\forall\mathrm{WMSO}$ translate into equivalent
sentences of $\forall\mathrm{MSO}$. The proof is based on the Heine-Borel theorem.
The structure of the current article is as follows. In Section \ref{section2}
we define the central notions
needed in the later sections. In Section \ref{section3} we compare the
expressivities of $\forall\mathrm{MSO}$ and $\forall\mathrm{WMSO}$. 
In Section \ref{main} we show undecidability of the first-order theory of the class of monadic expansions of
any geometric structure $(T,\beta)$ such that $T$ exends linearly in $2D$. In addition, we show
that for $n\geq 2$, the first-order theory of monadic expansions of $\rp$ is
not on any level of the arithmetical hierarchy. In Section \ref{section5} we
modify the approach in Section \ref{main} and
show undecidability of the $\mathrm{FO}$-theory of
the class of expansions by finite unary predicates of any
geometric structure $(T,\beta)$ such that $T$ extends linearly in $2D$.

\section{Preliminaries}\label{section2}
\subsection{Interpretations}
Let $\sigma$ and $\tau$ be relational vocabularies. Let $\mathcal{A}$ be a
nonempty class of $\sigma$-structures and  $\mathcal{C}$ a nonempty class of $\tau$-structures.
Assume that there exists a surjective map $F$ from $\mathcal{C}$ onto $\mathcal{A}$ and a
first-order $\tau$-formula $\varphi_{\mi{Dom}}(x)$ in one free variable, $x$,
such that for each structure $\mf{B}\in\mathcal{C}$, there is a bijection $f$ from the domain of $F(\mf{B})$ to the set
$$\{\ b\in \mi{Dom}(\mf{B})\ |\ \mf{B}\models\varphi_{\mi{Dom}}(b)\ \}.$$
Assume, furthermore, that for each relation symbol $R\in\sigma$, there is a first-order $\tau$-formula $\varphi_R(x_1,...,x_{\mi{Ar}(R)})$ such that
we have
$$R^{F(\mf{B})}(a_1,...,a_{\mi{Ar}(R)})\ \Leftrightarrow\ \mf{B}\models\varphi_R\bigl(f(a_1),...,f(a_{\mi{Ar}(R)})\bigr)$$
for every tuple $(a_1,...,a_{\mi{Ar}(R)})\in(\mi{Dom}(F(\mf{B})))^{\mi{Ar}(R)}$. Here $\mi{Ar}(R)$ is the arity of $R$.
We then say that the class $\mathcal{A}$ is \emph{uniformly first-order
interpretable in $\mathcal{C}$}. If $\mathcal{A}$ is a singleton class $\{\mf{A}\}$, we say that $\mf{A}$ is \emph{uniformly first-order
interpretable in $\mathcal{C}$}.

Assume that a class of $\sigma$-structures $\mathcal{A}$ is uniformly first-order interpretable in a class $\mathcal{C}$ of $\tau$-structures. Let $\mathcal{P}$ be a set of
unary relation symbols such that $\mathcal{P}\cap (\sigma\cup\tau)\, =\, \emptyset$. Define a map $I$ from the set of first-order $(\sigma\cup\mathcal{P})$-formulae to
the set of first-order $(\tau\cup\mathcal{P})$-formulae as follows. 
\begin{enumerate}
\item
If $P\in\mathcal{P}$, then $I(Px)\, :=\, Px$.
\item
If $k\in\mathbb{N}_{\geq 1}$ and $R\in\sigma$ is a $k$-ary relation symbol,
then
$I(R(x_1,...,x_k))\, :=\, \varphi_{R}(x_1,...,x_k)$,
where $\varphi_{R}(x_1,...,x_k)$ is the first-order formula for $R$ witnessing the
fact that $\mathcal{A}$ is uniformly first-order 
interpretable in\, $\mathcal{C}$.
\item
$I(x=y)\, :=\, x=y$.
\item
$I(\neg\varphi):=\neg I(\varphi)$.
\item
$I(\varphi\wedge\psi)\, :=\, I(\varphi)\wedge I(\psi)$.
\item
$I\bigl(\exists x\, \psi(x)\bigr)\, :=\, \exists x\bigl(\varphi_{\mi{Dom}}(x)\wedge I(\psi(x))\bigr).$
\end{enumerate}
We call the map $I$ the \emph{$\mathcal{P}$-expansion of a uniform interpretation of\, $\mathcal{A}$ in $\mathcal{C}$}. When $\mathcal{A}$ and $\mathcal{C}$ are known
from the context, we may call $I$ simply a \emph{$\mathcal{P}$-interpretation}. In the case where $\mathcal{P}$ is empty, the map $I$ is a \emph{uniform
interpretation of\, $\mathcal{A}$ in $\mathcal{C}$}.
\begin{lemma}\label{uniforminterpretationlemma}
Let $\sigma$ and $\tau$ be finite relational vocabularies. Let $\mathcal{A}$ be a class of $\sigma$-structures and\, $\mathcal{C}$ a class of $\tau$-structures. 
Assume that $\mathcal{A}$ is uniformly first-order interpretable in $\mathcal{C}$. Let $\mathcal{P}$ be a set of unary relation symbols such
that $\mathcal{P}\cap(\sigma\cup\tau)=\emptyset$. Let $I$ denote a related $\mathcal{P}$-interpretation.
Let\, $\varphi$ be a first-order $(\sigma\cup\mathcal{P})$-sentence. The following conditions are equivalent. 
\begin{enumerate}
\item
There exists an expansion $\mf{A}^*$ of a structure $\mf{A}\in\mathcal{A}$ to the vocabulary $\sigma\cup\mathcal{P}$ such
that $\mf{A}^*\models \varphi$.

\item
There exists an expansion $\mf{B}^*$ of a structure $\mf{B}\in\mathcal{C}$ to the
vocabulary $\tau\cup\mathcal{P}$ such that
$\mf{B}^*\models I(\varphi)$. 
\comment
\item
There exists an expansion $\mf{A}^*$ of the structure $\mf{A}$ to the vocabulary $\sigma\cup\mathcal{P}$ such
that $\mf{A}^*\models \varphi$.
\item
For each $\mf{B}_1\in\mathcal{C}$, there exists some expansion $\mf{B}_1^*$ of\, $\mf{B}_1$ to the vocabulary $\tau\cup\mathcal{P}$ such
that $\mf{B}_1^*\models I(\varphi)$.
\item
There exists some $\mf{B}_2\in\mathcal{C}$ such that for some
expansion $\mf{B}_2^*$ of\, $\mf{B}_2$ to the
vocabulary $\tau\cup\mathcal{P}$, we
have $\mf{B}_2^*\models I(\varphi)$.
\endcomment
\end{enumerate}
\end{lemma}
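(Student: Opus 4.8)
The plan is to reduce both directions of the equivalence to a single \emph{satisfaction lemma} proved by induction on the structure of subformulae. Fix a structure $\mf{B}\in\mathcal{C}$, write $\mf{A}:=F(\mf{B})\in\mathcal{A}$, and let $f$ be the bijection from $\mi{Dom}(\mf{A})$ onto $D:=\{\,b\in\mi{Dom}(\mf{B})\mid \mf{B}\models\varphi_{\mi{Dom}}(b)\,\}$ provided by the uniform interpretation. The key bookkeeping device is a canonical correspondence between $\mathcal{P}$-expansions of $\mf{A}$ and of $\mf{B}$: given an expansion $\mf{A}^*$, define an expansion $\mf{B}^*$ by $P^{\mf{B}^*}:=f[P^{\mf{A}^*}]$ for each $P\in\mathcal{P}$ (and conversely $P^{\mf{A}^*}:=\{\,a\in\mi{Dom}(\mf{A})\mid f(a)\in P^{\mf{B}^*}\,\}$). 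Since $f$ maps into $D$, the predicates $P^{\mf{B}^*}$ live inside the $\varphi_{\mi{Dom}}$-set; their behaviour outside $D$ is irrelevant because, as the induction will show, the translation $I$ never evaluates an atom $Px$ at a point lying outside $D$.

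The claim I would establish is the following: for every first-order $(\sigma\cup\mathcal{P})$-formula $\psi(x_1,\dots,x_m)$ and every tuple $(a_1,\dots,a_m)\in(\mi{Dom}(\mf{A}))^m$,
$$\mf{A}^*\models\psi(a_1,\dots,a_m)\quad\Leftrightarrow\quad \mf{B}^*\models I(\psi)\bigl(f(a_1),\dots,f(a_m)\bigr).$$
The base cases are where the definitions do their work. For $\psi=Px_i$ we have $I(Px_i)=Px_i$, and $\mf{A}^*\models Pa_i\Leftrightarrow a_i\in P^{\mf{A}^*}\Leftrightarrow f(a_i)\in P^{\mf{B}^*}$ by the definition of $P^{\mf{B}^*}$ together with the injectivity of $f$. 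For an atom $R(x_1,\dots,x_k)$ with $R\in\sigma$ we have $I(R(\bar x))=\varphi_R(\bar x)$, and the defining property of the interpretation gives $R^{\mf{A}}(\bar a)\Leftrightarrow\mf{B}\models\varphi_R(f(\bar a))$; since $\varphi_R$ is a $\tau$-formula and $\mf{B}^*$ merely expands $\mf{B}$ by symbols in $\mathcal{P}$, and since $R^{\mf{A}^*}=R^{\mf{A}}$, this equals $\mf{B}^*\models\varphi_R(f(\bar a))$. Equality is immediate. The Boolean steps are routine. The essential step is the existential quantifier: here $I(\exists x\,\theta)=\exists x\,(\varphi_{\mi{Dom}}(x)\wedge I(\theta))$, and the relativizing conjunct $\varphi_{\mi{Dom}}$ exactly matches the range of $f$, because $f$ is a bijection onto $D$ and $\mf{B}^*\models\varphi_{\mi{Dom}}(c)\Leftrightarrow c\in D$ (again using that $\varphi_{\mi{Dom}}$ is a $\tau$-formula). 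Thus witnesses $b\in\mi{Dom}(\mf{A})$ for the left side correspond under $f$ precisely to witnesses $c\in D$ for the right side, and the induction hypothesis closes the case.

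With the satisfaction lemma in hand, both implications are short. For $(1)\Rightarrow(2)$, take $\mf{A}^*$ expanding some $\mf{A}\in\mathcal{A}$ with $\mf{A}^*\models\varphi$; surjectivity of $F$ yields $\mf{B}\in\mathcal{C}$ with $F(\mf{B})=\mf{A}$, and setting $P^{\mf{B}^*}:=f[P^{\mf{A}^*}]$ as above gives an expansion with $\mf{B}^*\models I(\varphi)$ by the lemma applied to the sentence $\varphi$. For $(2)\Rightarrow(1)$, take $\mf{B}^*$ expanding some $\mf{B}\in\mathcal{C}$ with $\mf{B}^*\models I(\varphi)$, put $\mf{A}:=F(\mf{B})$, and define the pullback expansion $\mf{A}^*$ by $P^{\mf{A}^*}:=\{a\mid f(a)\in P^{\mf{B}^*}\}$; the lemma again delivers $\mf{A}^*\models\varphi$. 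I expect the main obstacle to be the quantifier step, where one must verify carefully that relativization to $\varphi_{\mi{Dom}}$ lines up with the range of $f$ under the $\mathcal{P}$-expansion — in particular that passing from $\mf{B}$ to the expansion $\mf{B}^*$ leaves the $\tau$-formulae $\varphi_R$ and $\varphi_{\mi{Dom}}$ unaffected, which is what guarantees that the $\mathcal{P}$-predicates and the interpretation machinery do not interfere.
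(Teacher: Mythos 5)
Your proof is correct: the paper itself dismisses this lemma as ``Straightforward'' and gives no argument, and what you have written out is precisely the standard satisfaction lemma (induction on subformulae, with the pushforward/pullback correspondence of $\mathcal{P}$-expansions along the bijection $f$ and relativization of quantifiers to $\varphi_{\mi{Dom}}$) that the authors are implicitly relying on. Your remark that the behaviour of the predicates $P^{\mf{B}^*}$ outside the $\varphi_{\mi{Dom}}$-set is irrelevant correctly handles the one point where care is needed in the direction from (2) to (1).
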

\begin{proof}
Straightforward.
\end{proof}

\subsection{Logics and structures}\label{logics and structures}

\emph{Monadic second order logic}, $\mathrm{MSO}$,
extends first-order logic with
quantification of
relation symbols ranging over subsets of the domain of a model.
In \emph{universal (existential) monadic second order logic}, $\forall \mathrm{MSO}$ ($\exists \mathrm{MSO}$),
the quantification of monadic relations is restricted to universal (existential)
prenex quantification in the beginning of formulae.
The logics $\forall \mathrm{MSO}$ and $\exists \mathrm{MSO}$ are also known as monadic $\Pi_1^1$ and monadic $\Sigma_1^1$.
\emph{Weak monadic second-order logic}, $\mathrm{WMSO}$, is a semantic variant of
monadic second-order logic in which the quantified relation symbols range over
finite subsets of the domain of a model.
The weak variants $\forall \mathrm{WMSO}$
and $\exists \mathrm{WMSO}$ of $\forall \mathrm{MSO}$ and $\exists\mathrm{MSO}$ are
defined in the obvious way.
%

%

Let $\mathcal{L}$ be any fragment of second-order logic.
The \emph{$\mathcal{L}$-theory} of a structure $\mf{M}$ of a vocabulary $\tau$ is the set of $\tau$-sentences $\varphi$ of
$\mathcal{L}$ such that $\mf{M}\models\varphi$.

Define two binary relations $H,V\subseteq\mathbb{N}^2\times\mathbb{N}^2$ as follows.
\begin{itemize}
\item
$H\ =\ \{\ \bigl((i,j),(i+1,j)\bigr)\ |\ i,j\in\mathbb{N}\ \}$.
\item
$V\ =\ \{\ \bigl((i,j),(i,j+1)\bigr)\ |\ i,j\in\mathbb{N}\ \}$.
\end{itemize}
We let $\mf{G}$ denote the structure $(\mathbb{N}^2,H,V)$, and call
it the \emph{grid}. The relations $H$ and $V$ are
called the \emph{horizontal} and \emph{vertical successor relations} of $\mf{G}$,
respectively. A \emph{supergrid} is a structure of the vobabulary $\{H,V\}$ that has $\mf{G}$ as a
substructure. We denote the class of supergrids by $\mathcal{G}$.
Let $(\mf{G},R)$ be the expansion of $\mf{G}$, where
$R\ =\ \{\ \bigl((0,i),(0,j)\bigr)\in\mathbb{N}^2\times\mathbb{N}^2\ |\ i < j\ \}.$
We denote the structure $(\mf{G},R)$ by $\mf{R}$, and call it the \emph{recurrence grid}.

\begin{figure}
\centering
\includegraphics[scale=1]{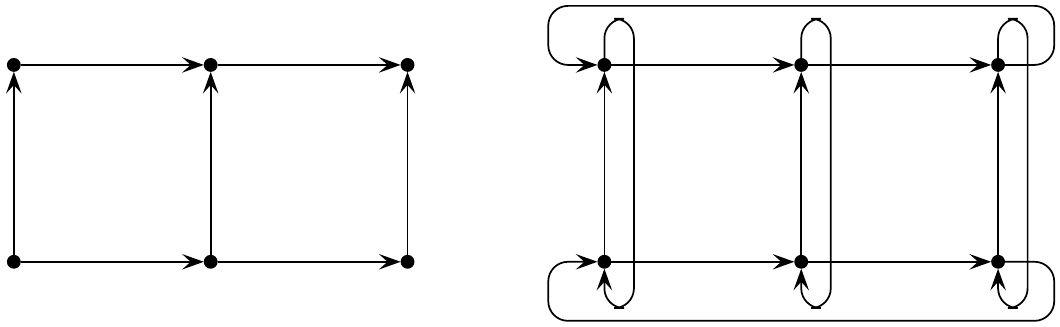}
\caption{Illustration of a $3\times 2$ grid and a $3\times 2$ torus.}
\label{fig:torus}
\end{figure}
Let $m$ and $n$ be positive integers. Define two binary relations $H_{m,n},V_{m,n}\subseteq (m\times n)^2$ as follows.
(Note that we define $m = \{0,...,m-1\}$, and analogously for $n$.)
\begin{itemize}
\item $H_{m,n}=H\upharpoonright (m\times n)^2\cup\{((m-1,i),(0,i))\mid i<n\}$.
\item $V_{m,n}=V\upharpoonright (m\times n)^2\cup\{((i,n-1),(i,0))\mid i<m\}$.
\end{itemize}
We call the structure $(m\times n, H_{m,n},V_{m,n})$ the \emph{$m\times n$ torus} and denote it by  $\mf{T}_{m,n}$.
A torus is essentially a finite grid whose east border wraps back to the
west border and north border back to the south border.

\subsection{Geometric affine betweenness structures}\label{structures}

Let $(\rsq, d)$ be the $n$-dimensional Euclidean space with the canonical metric $d$.
We always assume $n\geq 1$. We define the
ternary Euclidean \emph{betweenness} relation $\beta$ such that $\beta(s,t,u)$ iff $d(s,u)=d(s,t)+d(t,u)$. By $\beta^*$ we
denote the \emph{strict betweenness} relation, i.e., $\beta^*(s,t,u)$ iff $\beta(s,t,u)$ and $s\not=t\not=u$.
We say that the points $s,t,u\in\mathbb{R}^n$ are \emph{collinear} if the 
disjunction $\beta(s,t,u)\vee \beta(s,u,t)\vee \beta(t,s,u)$ holds in $\rp$.
We define the first-order $\{\beta\}$-formula
$collinear(x,y,z):=\beta(x,y,z)\vee\beta(x,z,y)\vee\beta(y,x,z).$

Below we study geometric betweenness structures of the type $(T,\beta_T)$ where $T\subseteq\mathbb{R}^n$ and $\beta_T=\beta\upharpoonright T$.
Here $\beta\upharpoonright T$ is the restriction of the betweenness predicate $\beta$ of $\mathbb{R}^n$ to the set $T$.
To simplify notation, we usually refer to these structures by $(T,\beta)$.
Let $T\subseteq\rsq$ and let $\beta$ the corresponding betweenness relation.
We say that $L\subseteq T$ is a \emph{line in $T$} if
the following conditions hold.
\begin{enumerate}
\item
There exist points $s,t\in L$ such that $s\not= t$.
\item
For all $s,t,u\in L$, the points $s,t,u$ are collinear.
\item
Let  $s,t\in L$ be points such that $s\not=t$. For all $u\in T$, if $\beta(s,u,t)$ or $\beta(s,t,u)$, then $u\in L$.
\end{enumerate}
Let $T\subseteq\rsq$ and let $L_1$ and $L_2$ be lines in $T$. We say that $L_1$ and $L_2$
\emph{intersect} if $L_1\not=L_2$ and $L_1\cap L_2\neq\emptyset$. We say that the lines $L_1$ and $L_2$
\emph{intersect in $\rsq$} if $L_1\not=L_2$ and $L_1'\cap L_2'\neq\emptyset$, 
where $L_1',L_2'$ are the lines in $\rsq$ such that $L_1\subseteq L_1'$ and $L_2\subseteq L_2'$.
A subset $S\subseteq\mathbb{R}^n$ is an \emph{$m$-dimensional flat} of $\mathbb{R}^n$, where $0\leq m\leq n$, if there exists a set of
$m$ linearly independent vectors $v_1,\dots,v_m\in\mathbb{R}^n$ and a vector $h\in \rsq$ such that $S$ is
the $h$-translated span of the vectors $v_1,\dots,v_m$,
in other words $S=\{u\in\rsq\mid u=h + r_1 v_1 + \dots +r_m v_m,\ r_1,\dots,r_m\in\mathbb{R}\}$.
None of the vectors $v_i$ is allowed to be the zero-vector. 
A set $U\subseteq\mathbb{R}^n$ is a \emph{linearly regular $m$-dimensional flat}, where $0\leq m\leq n$, if the following conditions hold.
\begin{enumerate}
\item
There exists an $m$-dimensional flat $S$ such that $U\subseteq S$. 
\item
There does not exist any $(m-1)$-dimensional flat $S$ such that $U\subseteq S$.
\item
$U$ is \emph{linearly complete}, i.e., if $L$ is a line in $U$ and $L'\supseteq L$ the corresponding line in $\mathbb{R}^n$,
and if $r\in L'$ is a point in $L'$ and $\epsilon\in\mathbb{R}_+$ a positive real number, then there exists a point $s\in L$
such that $d(s,r) < \epsilon$. Here $d$ is the canonical metric of $\mathbb{R}^n$.
\item
$U$ is \emph{linearly closed}, i.e., if $L_1$ and $L_2$ are lines in $U$ and $L_1$ and $L_2$ intersect in $\mathbb{R}^n$, 
then the lines $L_1$ and $L_2$ intersect. In other words, there exists a point $s\in U$ such that $s\in L_1\cap L_2$.
\end{enumerate} 
A set $T\subseteq\mathbb{R}^n$ \emph{extends linearly in $mD$}, where $m\leq n$, if
there exists a linearly regular $m$-dimensional flat $S$, a positive real number $\epsilon\in\mathbb{R}_+$ and a
point $x \in S\cap T$ such that 
$\{\ u\in S\ |\ d(x,u)<\epsilon\ \}\ \subseteq T.$
It is easy show that for example $\mathbb{Q}^2$ extends linearly in $2D$.

\subsection{Tilings}
A function $t:4\longrightarrow\mathbb{N}$ is called a \emph{tile type}.
Define the set $\mathrm{TILES}\ :=\ \{\ P_t\ |\ t\text{ is a tile type }\ \}$ of unary relation symbols.
The unary relation symbols in the set $\mathrm{TILES}$ are called \emph{tiles}.
The numbers $t(i)$ of a tile $P_t$ are the \emph{colours} of $P_t$.
The number $t(0)$ is the \emph{top colour}, $t(1)$ the \emph{right colour},
$t(2)$ the \emph{bottom colour}, and $t(3)$ the \emph{left colour} of $P_t$.
Let $T$ be a finite nonempty set of tiles. We say that a structure $\mf{A}=(A,V,H)$, where $V,H\subseteq A^2$, is
\emph{$T$-tilable}, if there exists an expansion of $\mf{A}$ to
the vocabulary
$\{H,V\}\cup\{\ P_t\ |\ P_t\in T\ \}$
such that the following conditions hold.
\begin{enumerate}
\item
Each point of $A$ belongs to the extension of
exactly one symbol $P_t$ in $T$.
\item
If $u H v$ for some points $u,v\in A$,
then the right colour of the tile $P_t$ s.t. $P_{t}(u)$ is the same as the left
colour of the tile $P_{t'}$ such that $P_{t'}(v)$.
\item
If $u V v$ for some points $u,v\in A$,
then the top colour of the tile $P_t$ s.t. $P_{t}(u)$ is the same as the bottom
colour of the tile $P_{t'}$ such that $P_{t'}(v)$.
\end{enumerate}
Let $t\in T$. We say that the grid $\mf{G}$ is \emph{$t$-recurrently $T$-tilable} if 
there exists an expansion of $\mf{G}$ to the vocabulary $\{H,V\}\cup\{\ P_t\ |\ t\in T\ \}$ such that
the above conditions $1 - 3$ hold, and additionally,
there exist infinitely many points $(0,i)\in\mathbb{N}^2$
such that $P_t\bigl((0,i)\bigr)$. 
Intuitively this means that the tile $P_t$ occurs infinitely many 
times in the leftmost column of the grid $\mf{G}$.
Let $\mathcal{F}$ be the set of finite, nonempty sets $T\subseteq\mathrm{TILES}$, and let $\mathcal{H}\ :=\ \{\ (t,T)\ |\ T\in\mathcal{F},\ t\in T\ \}$.
Define the following languages
\begin{align*}
\mathcal{T}\ :=&\ \{\ T\in\mathcal{F}\ \mid\ \mf{G}\text{ is $T$-tilable } \},\\
\mathcal{R}\ :=&\ \{\ (t,T)\in\mathcal{H}\ \mid\ \mf{G}\text{ is $t$-recurrently $T$-tilable } \},\\
\mathcal{S}\ :=&\ \{\ T\in \mathcal{F}\ \mid\
\text{ there is a torus $\mf{D}$ which is $T$-tilable }\}.
\end{align*}
The \emph{tiling problem} is the membership problem of the set $\mathcal{T}$
with the input set $\mathcal{F}$.
The \emph{recurrent tiling problem} is the
membership problem of the set $\mathcal{R}$
with the input set $\mathcal{H}$.
The
\emph{periodic tiling problem} is the
membership problem of $\mathcal{S}$ with the input set $\mathcal{F}$.

\begin{theorem}\cite{Berger}
The tiling problem is $\Pi_1^0$-complete.
\end{theorem}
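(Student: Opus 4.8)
The statement is the classical undecidability of the \emph{domino problem}, due to Berger, so I would prove both the upper bound $\mathcal{T}\in\Pi_1^0$ and matching $\Pi_1^0$-hardness. For the upper bound I would argue by compactness. First, $\mf{G}$ is $T$-tilable iff every finite square $\{0,\dots,k\}^2$ admits an assignment of tiles from $T$ satisfying the matching conditions 1--3: the forward direction is immediate by restriction, and for the converse one orders the tilings of the squares $\{0,\dots,k\}^2$ into a tree in which the parent of a tiling of the $(k+1)$-square is its restriction to the $k$-square. As $T$ is finite each square has only finitely many candidate tilings, so the tree is finitely branching; if every finite square is tilable the tree is infinite, and K\"onig's lemma supplies an infinite branch, i.e.\ a nested sequence of tilings whose union is a coherent tiling of all of $\mathbb{N}^2$. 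Since ``the $k$-square is $T$-tilable'' is a decidable property of $(k,T)$, membership $T\in\mathcal{T}$ is equivalent to a $\forall k$ statement with decidable matrix, which is $\Pi_1^0$.

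For hardness I would reduce the complement of the halting problem (which is $\Pi_1^0$-complete) to $\mathcal{T}$: from a Turing machine $M$ I compute a finite tile set $T_M$ with $\mf{G}$ being $T_M$-tilable iff $M$ does not halt on empty input. The simulation is the standard one. Colours encode either a tape symbol or a (state, symbol) pair marking the head; the horizontal matching condition forces each row to spell a single well-formed configuration with at most one head marker; and the vertical matching condition forces the configuration in row $j+1$ to be exactly the $M$-successor of the one in row $j$, the transition table of $M$ being built into the admissible top/bottom colour pairs. No tile is allowed to sit above a configuration whose head is in a halting state, so a halting run reaches a row that cannot be covered, whereas a non-halting run continues upward forever.

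The crux, and the genuinely hard part, is \emph{initialisation}: local colour matching cannot force any row to be the initial configuration, since the outward-facing colours of a boundary tile are unconstrained. Hence ``garbage'' tilings---coherent infinite sequences of configurations with an arbitrary, non-initial bottom row---are \emph{a priori} admissible and would survive even when $M$ halts on the empty input, breaking the naive reduction. This is exactly the obstacle that Berger's theorem overcomes, by superimposing on the computation tiles an \emph{aperiodic} skeleton (e.g.\ Robinson-type tiles) whose matching rules force an unbounded hierarchy of nested, correctly bordered square blocks, inside each of which a fresh, properly initialised computation of $M$ is forced to run for a number of steps growing with the block size. If $M$ halts after $s$ steps, every block of side larger than $s$ is uncompletable, so $\mf{G}$ is not $T_M$-tilable; if $M$ never halts, the aperiodic tiling together with the unbounded correct computations assembles into a tiling of the whole grid. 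The map $M\mapsto T_M$ is computable, giving $\Pi_1^0$-hardness, and with the upper bound we obtain $\Pi_1^0$-completeness.
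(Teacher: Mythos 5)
The paper does not prove this statement at all --- it is quoted as a known result with a citation to Berger, so there is no internal proof to compare against. Your sketch is the standard textbook argument and its overall architecture is correct. The upper bound is essentially complete as written: the compactness/K\"onig's lemma equivalence between tilability of $\mathbb{N}^2$ and tilability of every finite square is right, the tree of finite tilings is finitely branching because $T$ is finite, and decidability of ``the $k$-square is $T$-tilable'' gives the $\Pi_1^0$ form. You also correctly identify that the problem as defined here is the \emph{unconstrained} tiling problem (no distinguished origin tile), which is exactly why the easy Wang-style reduction fails and aperiodicity is needed.

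The one place where your argument is not self-contained is the hardness direction: you name the initialisation obstacle precisely and describe how an aperiodic Robinson-type skeleton of nested, correctly bordered squares resolves it, but you do not construct such a tile set or verify that its matching rules actually force the claimed hierarchy and the properly initialised computations inside each block. That verification is essentially the entire content of Berger's (and Robinson's) theorem, and it is substantial. As a companion to a cited classical result this level of detail is appropriate, but you should be explicit that the aperiodic-skeleton step is being imported from the literature rather than proved; everything else in your write-up is correct.
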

\begin{theorem}\cite{Harel:1985}
The recurrent tiling problem is $\Sigma_1^1$-complete.
\end{theorem}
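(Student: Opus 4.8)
The plan is to prove the two directions separately: first that $\mathcal{R}$ lies in $\Sigma_1^1$, and then that every $\Sigma_1^1$ set many-one reduces to $\mathcal{R}$, which together give $\Sigma_1^1$-completeness.

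For the upper bound I would observe that a $t$-recurrent $T$-tiling of $\mf{G}$ is just a function $f\colon\mathbb{N}^2\to T$, which can be coded as a single second-order object (a functional subset of $\mathbb{N}^2\times T$, or a function $\mathbb{N}\to\mathbb{N}$ via a pairing function). Membership $(t,T)\in\mathcal{R}$ is then expressible as $\exists f\,\bigl(\Phi(f)\wedge\Psi(f)\bigr)$, where $\Phi(f)$ asserts that $f$ assigns to each grid point exactly one tile and respects the colour-matching conditions $1$--$3$ across every $H$- and $V$-adjacent pair, and $\Psi(f)$ asserts that $f(0,m)=P_t$ for infinitely many $m$. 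Here $\Phi$ is $\Pi_1^0$ (a universal arithmetic statement over adjacent cells) and $\Psi$ is $\Pi_2^0$, so $\Phi\wedge\Psi$ is arithmetical and the whole formula is $\Sigma_1^1$; hence $\mathcal{R}\in\Sigma_1^1$.

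For the lower bound I would reduce from a standard $\Sigma_1^1$-complete problem: whether a nondeterministic Turing machine $M$ started on the blank tape has an infinite computation that enters a distinguished state $q_r$ infinitely often. Given $M$ I construct a tile set $T_M$ and a distinguished tile $t$ via the classical simulation of Turing computations by Wang tiles: I let the vertical ($V$) direction be time and the horizontal ($H$) direction be tape position, so that row $j$ of $\mf{G}$ encodes the configuration at step $j$. The tiles carry the current tape symbol on their top and bottom edges and carry the head position and state through their left and right edges, with the admissible tiles chosen so that vertical adjacency enforces exactly one application of a transition of $M$ while horizontal adjacency enforces that only the head cell changes state. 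The tile $t$ is designed to appear at $(0,j)$ precisely when the configuration at time $j$ witnesses a visit to $q_r$, so that $\mf{G}$ is $t$-recurrently $T_M$-tilable if and only if $M$ has an infinite run visiting $q_r$ infinitely often.

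The main obstacle is twofold and lies entirely in the lower bound. First, since $\mf{G}$ is the quarter-plane $\mathbb{N}^2$ and the tiling conditions impose no constraint on row $0$ from below, I must force row $0$ to encode the initial configuration (blank tape, head at cell $0$, initial state); I would do this with boundary colours that cannot be matched from outside the grid together with a forced corner tile at $(0,0)$, so that the left and bottom edges of the quadrant pin down the start of the computation. Second, and more seriously, the recurrence condition inspects only column $0$, whereas the head of $M$ may be arbitrarily far to the right when it enters $q_r$, and a flag cannot simply be propagated leftward along an infinite row because there is no way to anchor it ``at infinity''. I would resolve this by first putting $M$ into a normal form in which the distinguished state $q_r$ is entered only while the head scans cell $0$; with this normal form, ``$q_r$ occurs infinitely often'' is equivalent to ``$t$ occurs at $(0,j)$ for infinitely many $j$'', so the equivalence underlying the reduction goes through. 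Combining the two inclusions yields the $\Sigma_1^1$-completeness of the recurrent tiling problem.
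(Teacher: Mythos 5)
The paper does not actually prove this statement: it is imported as a black box from Harel's 1985 paper, so there is no in-paper argument to compare against. Judged on its own merits, your overall architecture --- a direct $\Sigma_1^1$ upper bound, plus a lower bound by Wang-tile simulation of a nondeterministic machine, with rows encoding configurations and with $M$ normalised so that the recurring state $q_r$ is entered only while scanning cell $0$ --- is the right shape and matches Harel's actual strategy. The upper-bound paragraph is correct, and your resolution of the second obstacle (the normal form pinning recurrences to column $0$) is sound. Two caveats: the $\Sigma_1^1$-completeness of your source problem (recurring nondeterministic computations) is itself a nontrivial ingredient, proved by reduction from ill-foundedness of recursive trees, and deserves at least a citation rather than the label ``standard''.

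The genuine gap is your resolution of the first obstacle. The recurrent tiling problem as defined here has \emph{no} origin constraint: a tiling is any assignment of tiles to $\mathbb{N}^2$ satisfying the two adjacency conditions, together with the requirement that $t$ occur infinitely often in column $0$. There is no mechanism for ``a forced corner tile at $(0,0)$'', and ``boundary colours that cannot be matched from outside the grid'' impose no constraint at all: the bottom edges of row-$0$ tiles and the left edges of column-$0$ tiles are simply unconstrained, since in this problem a boundary only \emph{removes} matching obligations and never creates them. Hence nothing in your construction prevents row $0$ from encoding an arbitrary, possibly unreachable and possibly infinitely supported, pseudo-configuration, and the backward direction of your reduction fails: a machine with no recurring run from the blank tape can still admit a recurrent tiling seeded by such a row. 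This initialization issue is exactly why the non-recurrent unconstrained problem needs Berger's machinery, and the whole point of Harel's theorem is that the recurrence condition can be used a second time to circumvent it. The standard repair is to add a layer of ``uninitialized'' tiles that may fill any lower portion of the grid, to allow an initial-configuration row to sit only above uninitialized rows (or as row $0$, where its whole row is then forced by horizontal matching to encode the blank-tape start), to let only rows above such an initial-configuration row carry the ``live computation'' colours, and to make $t$ a live-computation tile; then the mere occurrence of $t$ in column $0$ forces a genuine initial-configuration row to exist below it, and everything above that row is a faithful simulation. Without some such device your reduction, as written, does not establish hardness.
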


\begin{theorem}\cite{GurevichKoryakov:1972}\label{periodictilingcomplete}
The periodic tiling problem is $\Sigma^0_1$-complete.
\end{theorem}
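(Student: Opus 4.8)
The plan is to establish the two halves of $\Sigma^0_1$-completeness separately: membership of $\mathcal{S}$ in $\Sigma^0_1$, which is routine, and $\Sigma^0_1$-hardness, which carries the real content and proceeds by a many-one reduction from the halting problem. Throughout I will use the standard correspondence between torus-tilings and doubly-periodic tilings of the grid: an expansion of $\mf{T}_{m,n}$ witnessing $T$-tilability unrolls to a tiling of $\mf{G}$ invariant under the translations $(i,j)\mapsto(i+m,j)$ and $(i,j)\mapsto(i,j+n)$, and conversely every such doubly-periodic tiling of $\mf{G}$ descends to a tiling of the corresponding torus. Thus $T\in\mathcal{S}$ iff $T$ admits a doubly-periodic tiling of the plane.

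For membership in $\Sigma^0_1$, I observe that for fixed $T$ and fixed positive integers $m,n$, the property ``$\mf{T}_{m,n}$ is $T$-tilable'' is decidable: it is a finite constraint-satisfaction problem over the $m\cdot n$ cells, solvable by checking the finitely many assignments of tiles from $T$ to cells against the horizontal, vertical, and wrap-around matching conditions. Hence
$$T\in\mathcal{S}\quad\Longleftrightarrow\quad \exists m\,\exists n\ \bigl(\,m,n\geq 1\ \wedge\ \mf{T}_{m,n}\text{ is }T\text{-tilable}\,\bigr),$$
which exhibits $\mathcal{S}$ as the projection of a decidable predicate, so $\mathcal{S}\in\Sigma^0_1$.

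For $\Sigma^0_1$-hardness I would reduce the halting problem. Given a Turing machine $M$, I would effectively construct a finite tile set $T_M$ by the usual space-time simulation: a row of tiles encodes an instantaneous description of $M$, horizontal colour matching propagates tape symbols within a configuration, and vertical colour matching forces each row to be the $M$-successor of the one below it. To this I add (i) \emph{seed} tiles encoding the initial configuration of $M$ on empty input, and (ii) \emph{halting} tiles that may occur only when $M$ enters its halt state and whose vertical colours are rigged so that the row immediately above a halting row is forced to be the initial configuration again. The effect of (ii) is a \emph{reset}: a completed halting computation can be stacked on top of itself, producing a vertically periodic column of configurations; combined with a bounded-tape horizontal buffer bordered by matching left/right boundary tiles (legitimate because a halting computation visits only finitely many cells), this yields a doubly-periodic pattern and hence a tiling of a suitably sized torus. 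Conversely, a torus tiling is periodic in both directions, so reading off its $n$ rows from bottom to top yields a cyclic sequence of $M$-configurations; the only mechanism in $T_M$ permitting such a sequence to close up into a cycle is the reset at a halting row, so any torus tiling must contain a halting row, forcing $M$ to halt. Therefore $M$ halts on empty input iff $T_M\in\mathcal{S}$.

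The main obstacle is the design that makes the \emph{seed forced}. On the infinite plane local rules cannot compel the initial configuration to appear anywhere---this is exactly what permits aperiodic tilings---so the delicate point is to arrange the colours of $T_M$ so that \emph{every} torus tiling is compelled to contain both a seed row and a halting row, ruling out spurious periodic tilings that loop without ever passing through the initial or halting configurations. I expect to handle this the way Gurevich and Koryakov do: by superimposing a second ``synchronization'' layer of colours whose only consistent periodic closure on a torus pins down a unique vertical origin, thereby forcing exactly one genuine computation segment per period. Verifying that this synchronization layer admits no alternative toroidal closures, and that it composes correctly with the simulation and reset layers, is the technically demanding step; the remaining bookkeeping---finiteness and effectivity of $T_M$ and correctness of the matching conditions---is routine.
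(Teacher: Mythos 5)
The paper does not prove this theorem at all: it is imported from Gurevich and Koryakov \cite{GurevichKoryakov:1972} and used as a black box, so there is no internal proof to compare against. Judged on its own terms, your membership argument is complete and correct --- $\mathcal{S}$ is the projection along $(m,n)$ of the decidable predicate ``$\mf{T}_{m,n}$ is $T$-tilable,'' hence $\Sigma^0_1$ --- and your reduction runs in the right direction for hardness ($M$ halts iff $T_M$ tiles some torus).

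The hardness half, however, contains a genuine gap, and you have located it yourself: the whole theorem rests on the claim that \emph{every} toroidal tiling by $T_M$ is forced to contain a seed row and a halting row, and you neither construct nor verify the mechanism that achieves this. You name a ``synchronization layer'' and defer its correctness to the very authors whose proof you are supposed to be supplying; that is not residual bookkeeping, it is the entire content of the result. Your own observation that local rules cannot force a seed on the infinite plane shows why the step is delicate: the construction must exploit finiteness of the torus in an essential way, for instance by arranging the vertical colours of the simulation tiles into a directed graph in which every cycle is forced through the reset colour, so that the cyclic vertical colour sequence read off any single column of a torus necessarily traverses a halting row and hence a re-seeded initial row; an analogous argument is needed for the horizontal wrap-around and the tape-boundary markers, and one must also exclude degenerate torus tilings assembled entirely from blank or boundary tiles. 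Until that forcing argument is written down and checked, your reduction establishes only the easy implication (if $M$ halts then some torus is tilable) and not its converse, so the proof as it stands is incomplete.
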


\begin{lemma}\label{tilingdefinablelemma}
There is a computable function associating
each input $T$ to the
(periodic) tiling problem with a first-order sentence $\varphi_{T}$ of the
vocabulary $\tau:=\{H,V\}\cup T$ 
such that for all structures $\mf{A}$ of the vocabulary $\{H,V\}$, the structure $\mf{A}$ is $T$-tilable iff
there exists an expansion $\mf{A}^*$ of\, $\mf{A}$ to the
vocabulary $\tau$ such that $\mf{A}^*\models\varphi_{T}$.
\end{lemma}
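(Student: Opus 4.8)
The plan is to define $\varphi_T$ as a finite conjunction of first-order sentences, one for each of the three defining conditions of $T$-tilability, and then to verify that the map $T\mapsto\varphi_T$ is computable and that the resulting sentence is correct. Since $T$ is a finite set of tiles, and since the colours $t(0),t(1),t(2),t(3)$ of each tile are given data, every clause below is a genuine (finite) first-order formula, and the translation is plainly effective. Concretely, I would set $\varphi_T:=\varphi_1\wedge\varphi_2\wedge\varphi_3$, where
\begin{align*}
\varphi_1 &:= \forall x \bigvee_{P_t\in T} \Bigl( P_t(x) \wedge \bigwedge_{P_{t'}\in T,\, t'\neq t} \neg P_{t'}(x) \Bigr), \\
\varphi_2 &:= \forall x \forall y \Bigl( H(x,y) \rightarrow \bigvee_{\substack{P_t, P_{t'}\in T\\ t(1)=t'(3)}} ( P_t(x) \wedge P_{t'}(y) ) \Bigr), \\
\varphi_3 &:= \forall x \forall y \Bigl( V(x,y) \rightarrow \bigvee_{\substack{P_t, P_{t'}\in T\\ t(0)=t'(2)}} ( P_t(x) \wedge P_{t'}(y) ) \Bigr).
\end{align*}
Here $\varphi_1$ asserts that every point carries exactly one tile (condition $1$); $\varphi_2$ enforces horizontal colour matching, the disjunction ranging over the finitely many compatible pairs with $t(1)=t'(3)$ (condition $2$); and $\varphi_3$ enforces vertical matching via the pairs with $t(0)=t'(2)$ (condition $3$).

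To establish the stated equivalence I would argue in both directions. If $\mf{A}$ is $T$-tilable, then by definition there is an expansion $\mf{A}^*$ of $\mf{A}$ to the vocabulary $\tau=\{H,V\}\cup T$ satisfying conditions $1$--$3$; as each $\varphi_i$ is the direct first-order rendering of the corresponding condition, we get $\mf{A}^*\models\varphi_T$. Conversely, any expansion $\mf{A}^*$ with $\mf{A}^*\models\varphi_T$ satisfies all three conditions: $\varphi_1$ forces the predicates $(P_t)_{P_t\in T}$ to partition the domain, while $\varphi_2$ and $\varphi_3$ give exactly the horizontal and vertical adjacency constraints. Hence $\mf{A}$ is $T$-tilable, as witnessed by $\mf{A}^*$.

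There is essentially no hard step here: the substance of the lemma is just that the local, finitary nature of tile adjacency is faithfully captured in first-order logic once $T$ is finite. The only points that require care are bookkeeping ones, namely checking that $\varphi_1$ really forces a partition rather than a mere covering, and observing that the \emph{same} sentence $\varphi_T$ works uniformly regardless of whether $\mf{A}$ is the grid $\mf{G}$ or a torus $\mf{T}_{m,n}$, since conditions $1$--$3$ are purely local adjacency constraints that do not refer to the global shape of $\mf{A}$. This uniformity is precisely what justifies treating the periodic tiling input in the same breath, as indicated by the parenthetical ``(periodic)'' in the statement.
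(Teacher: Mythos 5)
Your proposal is correct and is exactly the argument the paper has in mind when it dismisses the proof as ``Straightforward'': the three tilability conditions are local, finitary adjacency constraints, and your sentences $\varphi_1,\varphi_2,\varphi_3$ are their direct first-order renderings, with the colour indices $t(1)=t'(3)$ and $t(0)=t'(2)$ matching the paper's conventions for right/left and top/bottom colours. Nothing further is needed.
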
 
\begin{proof}
Straightforward.
\end{proof}
\begin{lemma}\label{recurrencetilingdefinablelemma}
There is a computable function associating
each input $(t,T)$ of the
recurrent tiling problem with a first-order sentence $\varphi_{(t,T)}$ of the
vocabulary $\tau:=\{H,V,R\}\cup T$ 
such that the grid $\mf{G}$ is $t$-recurrently $T$-tilable iff
there exists an expansion $\mf{R}^*$ of the recurrence grid $\mf{R}$ to the
vocabulary $\tau$ such that $\mf{R}^*\models\varphi_{(t,T)}$.
\end{lemma}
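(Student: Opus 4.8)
The plan is to construct, for each input $(t,T)$ of the recurrent tiling problem, a first-order sentence $\varphi_{(t,T)}$ over $\tau=\{H,V,R\}\cup T$ that decomposes as a conjunction $\varphi_{(t,T)}:=\psi_T\wedge\chi_t$, where $\psi_T$ encodes the ordinary tiling constraints and $\chi_t$ encodes the recurrence condition using the extra relation $R$. For $\psi_T$ I would essentially reuse the formula produced by Lemma \ref{tilingdefinablelemma}: a conjunct asserting that every point satisfies exactly one tile predicate $P_t\in T$, a conjunct enforcing the horizontal colour-matching constraint along $H$, and a conjunct enforcing the vertical colour-matching constraint along $V$. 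These are routine universally quantified first-order sentences, so the interesting content is entirely in how $\chi_t$ captures ``$P_t$ occurs infinitely often in the leftmost column.''

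The key idea is that $R$ is interpreted on $\mf{R}$ as the strict linear order $\{((0,i),(0,j))\mid i<j\}$ on the leftmost column, and nowhere else. So first I would write an $\{H,V,R\}$-formula $\mathit{col}(x)$ defining membership in the leftmost column --- for instance by stating that $x$ lies in the field of $R$, i.e. $\exists y\,(R(x,y)\vee R(y,x))$, together with a guarantee that $R$ picks out exactly the first column. Because the recurrence grid $\mf{R}$ is a \emph{fixed} structure (not an arbitrary supergrid), I may simply exploit the known interpretation of $R$ directly rather than axiomatising it abstractly. Then the recurrence requirement ``there exist infinitely many $(0,i)$ with $P_t((0,i))$'' is expressed by saying that the set of column points carrying $P_t$ has no $R$-maximum:
\[
\chi_t:=\forall x\bigl(\mathit{col}(x)\wedge P_t(x)\rightarrow\exists y\,(R(x,y)\wedge P_t(y))\bigr)\wedge\exists x\,(\mathit{col}(x)\wedge P_t(x)).
\]
Since $R$ linearly orders the (infinite) first column as an $\omega$-chain, a $P_t$-set with no $R$-greatest element and at least one element must be infinite, which is exactly $t$-recurrence.

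The two directions of the biconditional then follow by unwinding definitions. Given a $t$-recurrent $T$-tiling of $\mf{G}$, I expand $\mf{R}$ by the induced tile predicates; the tiling conditions make $\psi_T$ true and the infinitude of $P_t$-labelled points $(0,i)$ makes $\chi_t$ true, since over the $\omega$-order $R$ an infinite subset has no maximum. Conversely, any expansion $\mf{R}^*\models\varphi_{(t,T)}$ yields a genuine $T$-tiling of $\mf{G}$ from $\psi_T$, and $\chi_t$ forces the $P_t$-labelled first-column points to form a nonempty $R$-unbounded subset of the $\omega$-chain, hence an infinite set, giving $t$-recurrence. The computability of $(t,T)\mapsto\varphi_{(t,T)}$ is immediate from the finite, uniform construction. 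The main obstacle I anticipate is purely a matter of care rather than depth: ensuring that $\mathit{col}(x)$ and the use of $R$ correctly isolate the leftmost column and that ``no $R$-maximum'' genuinely forces infinitude on the specific order type of $R$ in $\mf{R}$, rather than merely unboundedness in some order that could secretly be finite; this is where one leans on the fact that $\mf{R}$ is the fixed recurrence grid with $R$ ordering an infinite column as $(\mathbb{N},<)$.
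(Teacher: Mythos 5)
Your construction is correct and is exactly the argument the paper has in mind (its own proof is just ``Straightforward''): conjoin the tiling sentence of Lemma \ref{tilingdefinablelemma} with a recurrence clause stating that the set of $P_t$-points in the field of $R$ is nonempty and has no $R$-maximum, which over the fixed $\omega$-ordering of the leftmost column of $\mf{R}$ is equivalent to infinitude. Your attention to why ``$R$-unbounded'' implies ``infinite'' on this specific order type is precisely the point that makes the lemma work.
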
 
\begin{proof}
Straightforward.
\end{proof}
It is easy to see that the grid $\mf{G}$ is $T$-tilable iff there exists a supergrid $\mf{G'}$ that is $T$-tilable.
\section{Expressivity of universal $\mathrm{MSO}$ and
weak universal $\mathrm{MSO}$ over affine real structures $(\mathbb{R}^n,\beta)$}\label{section3}
In this section we investigate the expressive powers of $\forall{\mathrm{WMSO}}$
and $\forall\mathrm{MSO}$. While it is rather easy to conclude that the two logics
are incomparable in a rather strong sense (see Proposition \ref{expressivityproposition}),
when attention is limited to structures $(\rpm,(R_i)_{i\in I})$ that expand the affine real structure $\rp$,
sentences of $\forall\mathrm{WMSO}$ translate into equivalent sentences of $\forall\mathrm{MSO}$.
Let $\mathcal{L}$ and $\mathcal{L}'$ be fragments of second-order logic.
We write $\mathcal{L}\leq\mathcal{L}'$, if for every vocabulary $\sigma$,
any class of $\sigma$-structures definable by a $\sigma$-sentence of $\mathcal{L}$ is
also definable by a $\sigma$-sentence of $\mathcal{L}'$.
Let $\tau$ be a vocabulary such that $\beta\not\in\tau$.
The class of all expansions of $\rp$ to the vocabulary $\{\beta\}\cup\tau$ is
called the class of \emph{affine real $\tau$-structures}. Such structures can be
regarded as $\tau$-structures \emph{embedded} in the geometric structure $\rp$.
We say that \emph{$\mathcal{L}\leq\mathcal{L'}$ over $\rp$}, if 
for every vocabulary $\tau$ s.t. $\beta\not\in\tau$,
any subclass definable w.r.t. the class $\mathcal{C}$ of all
affine real $\tau$-structures by a sentence of $\mathcal{L}$ is also
definable w.r.t. $\mathcal{C}$ by a sentence of $\mathcal{L}'$.
We sketch a canonical proof of the following very simple observation. The result $\forall\mathrm{WMSO}\not\leq\mathrm{MSO}$
follows from already existing results (see \cite{tenCate:2011} for example), and the result $\forall\mathrm{MSO}\not\leq\mathrm{WMSO}$ is easy to prove.
\begin{proposition}\label{expressivityproposition}
$\forall\mathrm{WMSO}\not\leq\mathrm{MSO}$ and $\forall\mathrm{MSO}\not\leq\mathrm{WMSO}$.
\end{proposition}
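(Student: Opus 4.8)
The plan is to prove the two non-inclusions separately, each by exhibiting a single vocabulary and a single sentence in the left-hand logic whose model class cannot be captured by any sentence of the right-hand logic. The general strategy is to find a property that is expressible with one kind of set quantification (finite versus arbitrary) but whose complement-sensitivity or cardinality behaviour defeats the other. I expect each direction to reduce to a standard compactness or Ehrenfeucht--Fra\"{i}ss\'{e} argument, with the main effort being the selection of a clean separating example rather than any deep construction.

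For $\forall\mathrm{MSO}\not\leq\mathrm{WMSO}$ I would work over the empty relational vocabulary (pure equality) and use the $\forall\mathrm{MSO}$-sentence asserting infinity, namely one that says every monadic predicate $X$ which is closed under a definable successor-like relation and contains a least element must exhaust the domain --- or, more simply, the standard $\forall\mathrm{MSO}$ encoding of ``the domain is infinite.'' The point is that a $\forall X\,\psi$ formula can force infinity by quantifying over an arbitrary (possibly infinite) subset, whereas $\mathrm{WMSO}$ quantifiers range only over finite sets and hence cannot distinguish all infinite models from all large finite ones in the required uniform way. I would make this precise by recalling the classical fact that over the empty vocabulary every $\mathrm{WMSO}$-sentence is equivalent to a first-order sentence (since finite-set quantifiers collapse on structures with no relations beyond equality), and first-order logic over pure equality defines only finite Boolean combinations of cardinality thresholds; infinity is not among these, giving the separation. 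This is the direction the paper flags as ``more or less trivial.''

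For $\forall\mathrm{WMSO}\not\leq\mathrm{MSO}$ I would lean on the cited result of \cite{tenCate:2011}. The idea is that $\forall\mathrm{WMSO}$ can express a property --- for instance a well-foundedness or ``no infinite ascending chain'' condition on a binary relation --- by universally quantifying over all \emph{finite} subsets and demanding they behave correctly, and that this property provably separates from full $\mathrm{MSO}$. Concretely I would take a vocabulary with one binary relation, let the $\forall\mathrm{WMSO}$-sentence assert that the relation contains no arbitrarily long finite directed path (equivalently bounding path length via finite witnesses), and invoke the known non-definability of the complementary/limit property in $\mathrm{MSO}$; alternatively I would cite the explicit example from \cite{tenCate:2011} verbatim and note that it is a $\forall\mathrm{WMSO}$-sentence not equivalent to any $\mathrm{MSO}$-sentence.

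The main obstacle is the second direction: unlike the first, it is not a soft compactness argument but genuinely requires an $\mathrm{MSO}$-inexpressibility result, and $\mathrm{MSO}$ is a powerful logic over which direct Ehrenfeucht--Fra\"{i}ss\'{e} reasoning for separating a $\forall\mathrm{WMSO}$ property is delicate. Since the paper explicitly defers this to existing literature, I would not reprove it from scratch; instead the honest plan is to isolate the precise statement in \cite{tenCate:2011} that yields a $\forall\mathrm{WMSO}$-definable but non-$\mathrm{MSO}$-definable class and quote it. The remaining work is then purely the first, elementary direction, which I would write out in full.
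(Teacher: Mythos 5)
Your plan for the direction $\forall\mathrm{MSO}\not\leq\mathrm{WMSO}$ has a fatal flaw: over the empty vocabulary there is \emph{no} $\forall\mathrm{MSO}$-sentence (indeed, no $\mathrm{MSO}$-sentence at all) expressing that the domain is infinite. A standard monadic Ehrenfeucht--Fra\"{i}ss\'{e} game argument shows that two pure sets of sufficiently large cardinality are indistinguishable up to any fixed $\mathrm{MSO}$ quantifier rank, so $\mathrm{MSO}$ over pure equality only discriminates cardinalities up to a finite threshold; your proposed separating sentence does not exist, and the ``successor-like relation'' you invoke is not definable in the empty vocabulary. You have in effect swapped the two directions: ``the domain is infinite'' is the canonical witness for $\forall\mathrm{WMSO}\not\leq\mathrm{MSO}$, since under weak semantics $\forall X\exists y\,\neg Xy$ holds exactly in infinite structures (for every \emph{finite} $X$ some element is omitted), while no $\mathrm{MSO}$-sentence defines infinity. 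That is precisely the paper's argument for that direction, and it is much simpler than routing through the cited result on EF over infinite trees or through a well-foundedness property (for which you would in any case still owe both a $\forall\mathrm{WMSO}$-definition and an $\mathrm{MSO}$-inexpressibility proof).

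To repair $\forall\mathrm{MSO}\not\leq\mathrm{WMSO}$ you need a property whose expression genuinely requires quantification over \emph{infinite} sets. The paper separates $(\mathbb{R},<)$ from $(\mathbb{Q},<)$ by the $\forall\mathrm{MSO}$-sentence asserting Dedekind completeness (every subset bounded above has a least upper bound), and then shows that no $\mathrm{WMSO}$-sentence distinguishes the two structures, via the Ehrenfeucht--Fra\"{i}ss\'{e} game for $\mathrm{WMSO}$ in which the players pick finite sets in addition to elements: the duplicator extends the classical back-and-forth strategy for dense linear orders by matching the order configuration of each finite set chosen by the spoiler. Some argument of this shape is unavoidable, since the whole point of the direction is that finite-set quantifiers cannot detect completeness.
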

\begin{proof}[Proof Sketch]
It is easy to observe that $\forall\mathrm{WMSO}\not\leq\mathrm{MSO}$: consider the
sentence $\forall X\exists y\, \neg Xy$. This $\forall\mathrm{WMSO}$ sentence is
true in a model iff the domain of the model is infinite. A straightforward
monadic second-order Ehrenfeucht-Fra\"{i}ss\'{e} game
argument can be used to establish that
infinity is not expressible by any $\mathrm{MSO}$ sentence.
To show that $\forall\mathrm{MSO}\not\leq\mathrm{WMSO}$, consider the
structures $(\mathbb{R},<)$ and $(\mathbb{Q},<)$. The structures can be
separated by a sentence of $\forall\mathrm{MSO}$
stating that every subset bounded
from above has a least
upper bound. To see that the two structures cannot be
separated by any sentence of $\mathrm{WMSO}$, consider the
variant of the $\mathrm{MSO}$ Ehrenfeucht-Fra\"{i}ss\'{e} game
where the players choose \emph{finite sets} in
addition to domain elements. It is easy to establish
that this game characterizes the expressivity of $\mathrm{WMSO}$.
To see that the duplicator has a winning strategy in a game of any finite
length played on the
structures $(\mathbb{R},<)$ and $(\mathbb{Q},<)$, we
devise an extension of the folklore winning strategy in the 
corresponding first-order game. Firstly, the duplicator can obviously 
always pick an element whose  
betweenness configuration corresponds exactly to that of the element picked by the spoiler.
Furthermore, even if the spoiler picks a finite set,
it is easy to see that the duplicator can pick his set such that
each of its elements respect the betweenness
configuration of the set
picked by the spoiler.  
\end{proof}
We then show that $\forall\mathrm{WMSO}\leq\forall\mathrm{MSO}$ and $\mathrm{WMSO}\leq\mathrm{MSO}$ over $(\mathbb{R}^n,\beta)$
for any $n\geq 1$.
\begin{theorem}[Heine-Borel]
A set $S\subseteq\rsq$ is closed and bounded iff every open cover of $S$ has a
finite subcover. 
\end{theorem}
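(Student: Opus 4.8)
The plan is to prove the two implications separately, since the content of the theorem is really the equivalence of the finite subcover property with the conjunction of closedness and boundedness. I would first dispatch the forward direction, that a set $S\subseteq\rsq$ with the finite subcover property is closed and bounded. For boundedness, I would cover $S$ by the nested family of open balls $B(0,k)$, $k\in\mathbb{N}_{\geq 1}$; a finite subcover lies inside a single $B(0,N)$, whence $S\subseteq B(0,N)$. For closedness, I would argue contrapositively: if $p$ is a limit point of $S$ with $p\notin S$, then the open sets $U_k:=\{\,x\in\rsq\mid d(x,p)>1/k\,\}$ cover $S$, but any finite subcollection sits inside a single $U_N$, which keeps all of $S$ at distance at least $1/N$ from $p$, contradicting that $p$ is a limit point.

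The substantial direction is that a closed and bounded $S$ enjoys the finite subcover property. The key step is to establish this first for a closed box $K=[a_1,b_1]\times\dots\times[a_n,b_n]$. Here I would use a bisection argument: assuming for contradiction that some open cover $\mathcal{U}$ of $K$ admits no finite subcover, I would repeatedly subdivide $K$ into $2^n$ congruent closed sub-boxes, at each stage selecting one sub-box that is itself not coverable by finitely many members of $\mathcal{U}$. This produces a nested sequence of boxes whose diameters tend to $0$. By the completeness of $\mathbb{R}$ (the nested interval property applied coordinatewise), their intersection contains a point $q$, which lies in some $U\in\mathcal{U}$; since $U$ is open it contains a ball around $q$, hence contains every sufficiently small box of the sequence, contradicting the choice of those boxes.

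With compactness of the box in hand, I would finish by the standard closed-subset argument. Since $S$ is bounded it sits inside some box $K$, and since $S$ is closed its complement $\rsq\setminus S$ is open. Given any open cover $\mathcal{V}$ of $S$, the family $\mathcal{V}\cup\{\rsq\setminus S\}$ is an open cover of $K$; extracting a finite subcover and discarding $\rsq\setminus S$ yields a finite subcover of $S$ drawn from $\mathcal{V}$. I expect the bisection step to be the main obstacle, since it is the only place where completeness of the reals is genuinely invoked---indeed this is exactly what fails over $\mathbb{Q}^n$, so care is needed to appeal to the nested interval property rather than to any feature shared with the rationals.
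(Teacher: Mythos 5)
Your argument is correct and complete: the forward direction via the nested balls $B(0,k)$ and the sets $U_k=\{x\mid d(x,p)>1/k\}$, and the reverse direction via bisection of a closed box followed by the closed-subset argument, is the standard textbook proof of Heine--Borel, and each step as you describe it goes through. Note, however, that the paper does not prove this statement at all --- it is labelled as the classical Heine--Borel theorem and invoked as known background for Theorem \ref{expressivitytheorem} --- so there is no in-paper proof to compare against. Your own remark about where completeness of $\mathbb{R}$ enters (the nested-box step, which fails over $\mathbb{Q}^n$) is exactly the right point to be careful about, and is consistent with the paper's later reliance on the result only for subsets of $\rsq$.
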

\begin{theorem}
\label{expressivitytheorem}
Let $\mathcal{C}$ be the class of expansions $(\rpm,P)$ of $\rp$ with a unary predicate $P$,
and let $\mathcal{F}\subseteq\mathcal{C}$ be the subclass of $\mathcal{C}$ where $P$ is finite.
The class $\mathcal{F}$ is first-order definable with respect to $\mathcal{C}$. 
\end{theorem}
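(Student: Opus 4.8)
The plan is to characterise finiteness of $P$ by the conjunction of two geometric properties, each first-order definable over betweenness: that $P$ is \emph{bounded}, and that $P$ has \emph{no accumulation point}. By the Bolzano--Weierstrass property of $\rsq$ (equivalent to the Heine--Borel theorem stated above), a bounded subset of $\rsq$ with no accumulation point must be finite; conversely a finite set is trivially bounded and, each of its points being isolated, has no accumulation point. Hence $P$ is finite iff it is bounded and accumulation-free, and it remains only to express these two conditions by a first-order $\{\beta,P\}$-sentence $\varphi$, which then witnesses that $\mathcal{F}$ is definable with respect to $\mathcal{C}$.

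First I would build, for the fixed dimension $n$, a first-order $\{\beta\}$-formula $\mi{insimplex}(p;a_0,\dots,a_n)$ expressing that $p$ lies in the convex hull of $a_0,\dots,a_n$. This is done by induction on the number of vertices using betweenness alone: a point lies in $\mathrm{conv}(a_0,\dots,a_k)$ iff it lies on a segment joining $a_0$ to some point of $\mathrm{conv}(a_1,\dots,a_k)$, i.e. $\exists q\,\bigl(\mi{insimplex}(q;a_1,\dots,a_k)\wedge\beta(a_0,p,q)\bigr)$, with base case $\beta(a_0,p,a_1)$. From this one obtains a formula $\mi{int}(x;a_0,\dots,a_n)$ stating that $x$ lies in the topological interior of the simplex, namely by requiring the $a_i$ to be affinely independent (a betweenness-definable nondegeneracy condition, since full affine spans are definable by iterating the collinearity construction), that $x$ lie in the hull, and that $x$ lie on none of the $n+1$ facets $\mathrm{conv}(\{a_j:j\neq i\})$. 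Then I set $\varphi_{\mathrm{bd}} := \exists a_0\cdots\exists a_n\,\forall p\,\bigl(Pp\rightarrow \mi{insimplex}(p;a_0,\dots,a_n)\bigr)$ and
\[
\varphi_{\mathrm{na}} := \forall x\,\exists a_0\cdots\exists a_n\,\bigl(\mi{int}(x;a_0,\dots,a_n)\wedge\forall p\,\bigl((Pp\wedge p\neq x)\rightarrow\neg\,\mi{insimplex}(p;a_0,\dots,a_n)\bigr)\bigr),
\]
and take $\varphi:=\varphi_{\mathrm{bd}}\wedge\varphi_{\mathrm{na}}$.

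The correctness argument splits into the two conjuncts. Since the convex hull of finitely many points is bounded and, conversely, any bounded set lies inside a sufficiently large simplex, $\varphi_{\mathrm{bd}}$ holds in $(\rpm,P)$ iff $P$ is bounded. For $\varphi_{\mathrm{na}}$, the key point is that the simplices with $x$ in their interior form a neighbourhood basis at $x$: each such simplex contains an open ball around $x$, and arbitrarily small such simplices exist. Thus asserting the existence of one meeting $P$ only in $\{x\}$ says exactly that $x$ has a punctured neighbourhood disjoint from $P$, i.e. that $x$ is not an accumulation point; quantifying over all $x$ yields ``$P$ has no accumulation point''. Combining the two conjuncts with the Bolzano--Weierstrass/Heine--Borel step of the first paragraph gives $(\rpm,P)\models\varphi$ iff $P$ is finite.

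I expect the main obstacle to be the faithful first-order rendering of ``interior of a simplex'' and the verification that these interiors genuinely form a neighbourhood basis, so that $\varphi_{\mathrm{na}}$ captures accumulation-freeness exactly rather than some weaker, merely affine notion; the inductive construction of $\mi{insimplex}$ and the boundedness equivalence are then routine. The one genuinely analytic ingredient is the compactness step---that boundedness together with accumulation-freeness forces finiteness---which is precisely where the Heine--Borel theorem is invoked.
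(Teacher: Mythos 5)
Your proposal is correct and follows essentially the same route as the paper: the paper also characterises finiteness via boundedness plus the condition that every point of $\rsq$ has a punctured neighbourhood missing $P$ (phrased there as ``$P$ is closed, bounded and consists of isolated points''), and likewise realises neighbourhoods as interiors of betweenness-definable $n$-simplices before invoking Heine--Borel for the compactness step. Your $\varphi_{\mathrm{na}}$ simply merges the paper's two sentences $\varphi_1$ (closedness) and $\varphi_2$ (isolatedness) into a single quantification over all $x$, and your $\mi{insimplex}$/$\mi{int}$ construction mirrors the paper's $\mi{basis}_k$/$\mi{opentriangle}_k$ recursion.
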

\begin{proof}
We shall first establish that a set $T\subseteq\rsq$ is finite
iff it is closed, bounded and
consists of isolated points of $T$. Recall that an
isolated point $u$ of a set $U\subseteq\rsq$ is a point
such that there exists some open ball $B$ such that $B\cap U =\{u\}$.

Assume $T\subseteq\rsq$ is finite. Since $T$ is finite, we can find a minimum 
distance between points in the set $T$.
Therefore it is clear that each point $t$ in $T$ belongs to some open ball $B$
such that $B\cap T = \{t\}$, and hence $T$
consists of isolated points.
Similarly, since $T$ is finite, each point $b$ in the complement of $T$
has some minimum distance to the points of $T$, and therefore $b$
belongs to some open ball $B\subseteq\rsq\setminus T$.
Hence the set $T$ is the complement of the union of
open balls $B$ such that $B\subseteq\rsq\setminus T$, and therefore $T$ is closed.
Finally, since $T$ is finite, we can find a
maximum distance between the points in $T$,
and therefore $T$ is bounded.
Assume then that $T\subseteq\rsq$ is closed, bounded and consists of isolated points of $T$.
Since $T$ consists of isolated points, it has an open cover $\mathcal{C}\subseteq\mathrm{Pow}(\rsq)$ such
that each set in $\mathcal{C}$ contains exactly one point $t\in T$. The set $\mathcal{C}$ is an open
cover of $T$, and by the Heine-Borel theorem, there exists a
finite subcover $\mathcal{D}\subseteq\mathcal{C}$ of the set $T$.
Since $\mathcal{D}$ is finite and each set in $\mathcal{D}$ contains exactly one point of $T$, 
the set $T$ must also be finite.
We then conclude the proof by establishing that there exists a first-order formula $\varphi(P)$
stating that the unary predicate $P$ is closed, bounded and consists of isolated points. We will first define a formula $\mi{parallel}(x,y,t,k)$ stating that the
lines defined by $x,y$ and $t,k$ are parallel in $\rp$. We define
\begin{align*}
&\mi{parallel}(x,y,t,k):= x\neq y\wedge t\neq k \wedge \Big((\mi{collinear}(x,y,t)\wedge \mi{collinear}(x,y,k))\\
&\quad\vee \big(\neg\exists z(\mi{collinear}(x,y,z)\wedge\mi{collinear}(t,k,z))\\
&\quad\quad\wedge \exists z_1 z_2(x\neq z_1\wedge\mi{collinear}(x,y,z_1)\wedge\mi{collinear}(x,t,z_2)\wedge\mi{collinear}(z_1,z_2,k))\, \big)\Big).
\end{align*}
We will then define first-order $\{\beta\}$-formulae $\mi{basis}_k(x_0,\dots,x_k)$ and $\mi{flat}_k(x_0,\dots,x_k,z)$ using simultaneous recursion.
The first formula states that the vectors corresponding to the pairs $(x_0,x_i)$, $1\leq i\leq k$, form a basis of a $k$-dimensional flat.
The second formula states the points $z$ are exactly the points in the span of the basis defined by the vectors $(x_0,x_i)$, the origin being $x_0$.
First define
$\mi{basis}_0(x_0):= x_0=x_0$ and
$\mi{flat}_0(x_0,z):= x_0=z$.
Then define $\mi{flat}_k$ and $\mi{basis_k}$ recursively in the following way.
\begin{align*}
&\mi{basis}_k(x_0,\dots,x_k):=\mi{basis}_{k-1}(x_0,\dots,x_{k-1})\wedge \neg \mi{flat}_{k-1}(x_0,\dots,x_{k-1},x_k),\\
&\mi{flat}_k(x_0,\dots,x_k,z):=\mi{basis}_k(x_0,\dots,x_k)\\
&\quad\wedge \exists y_0,\dots,y_k\Big(y_0=x_0 \wedge y_k=z
\wedge\bigwedge_{i\, \leq\, k-1} \big(y_i=y_{i+1}\vee\mi{parallel}(x_0,x_{i+1},y_i,y_{i+1})\big)\Big).
\end{align*}
We then define a first-order $\{\beta,P\}$-formula $\mathit{sepr}(x,P)$ 
asserting that $x$ belongs to an open ball $B$ such that each point in $B\setminus\{x\}$
belongs to the complement of $P$.
The idea is to state that there exist $n+1$ points $x_0,\dots,x_n$
that form an \emph{$n$-dimensional triangle} around $x$, and every point contained in the triangle (with $x$ being a
possible exception) belongs to the complement of $P$. Every open ball in $\rsq$ is contained in some $n$-dimensional triangle in $\rsq$ and vice versa. We
will recursively define first-order formulae $\mi{opentriangle}_k(x_0,\dots,x_k,z)$ stating that $z$ is properly inside a $k$-dimensional triangle
defined by $x_0,\dots,x_k$. First define $\mi{opentriangle}_1(x_0,x_1,z):=\beta^*(x_0,z,x_1)$, and then define
\begin{align*}
&\mi{opentriangle}_{k}(x_0,\dots,x_k,z):=\mi{basis}_k(x_0,\dots,x_k)\\
&\quad\wedge \exists y\big(\mi{opentriangle}_{k-1}(x_0,\dots,x_{k-1},y)\wedge \beta^*(y,z,x_k)\big).
\end{align*}
\begin{figure}
\centering
\includegraphics[scale=2]{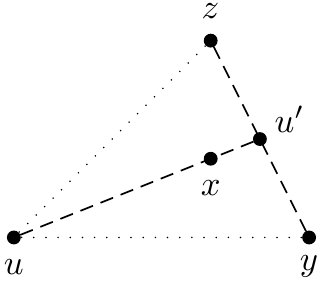}
\caption{Illustration of $\mi{opentriangle}_2(y,z,u,x)$.}
\label{fig:opentriangle}
\end{figure}
We are now ready to define $\mathit{sepr}(x,P)$. Let
\begin{multline*}
\mathit{sepr}(x,P) := \exists x_0,\dots,x_n\Bigl(\mi{opentriangle}_n(x_0,\dots, x_n,x)\\
\wedge \forall y \big((\mi{opentriangle}_n(x_0,\dots,x_n,y)\wedge y\neq x)\rightarrow \neg Py\big)\Bigr).
\end{multline*}
Now, the sentence
$\varphi_1:=\forall x\bigl(\neg Px\ \rightarrow\ \mathit{sepr}(x,P)\bigr)$
states that each point in the complement of $P$ is contained in an open ball $B\subseteq\rsq\setminus P$.
The sentence therefore states that the complement of $P$ is a union of open balls.
Since the set of unions of open balls is exactly the same as the set of open sets, the sentence states that $P$ is closed.
The sentence
$\varphi_2:=\forall x\bigl(Px\ \rightarrow\ \mathit{sepr}(x,P)\bigr)$
clearly states that $P$ consists of isolated points.
Finally, in order to state that $P$ is bounded, we define a formula asserting that
there exist points $x_0,\dots,x_n$ that
form an n-dimensional triangle around $P$.
\begin{multline*}
\varphi_3:=\exists x_0,\dots,x_n\Bigl(\mi{basis}_n(x_0,\dots,x_n)
\wedge \forall y\bigl(Py\rightarrow \mi{opentriangle}_n(x_0,\dots,x_n,y)\bigr)\Bigr)
\end{multline*} 
The conjunction $\varphi_1\wedge\varphi_2\wedge\varphi_3$
states that $P$ is finite.
\end{proof}
\begin{corollary}
Limit attention to expansions of $\rp$.
Sentences of\, $\forall\mathrm{WMSO}$ translate into
equivalent sentences of $\forall\mathrm{MSO}$,
and sentences of $\mathrm{WMSO}$ into equivalent sentences of\, $\mathrm{MSO}$. 
\end{corollary}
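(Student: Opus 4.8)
The plan is to reduce both claims to a single \emph{relativization} argument, taking as input the first-order finiteness formula constructed in the proof of Theorem~\ref{expressivitytheorem}. Write $\varphi_{\mathrm{fin}}(P) := \varphi_1 \wedge \varphi_2 \wedge \varphi_3$ for that formula, so that for every expansion $(\rpm, P)$ of $\rp$ we have $(\rpm, P) \models \varphi_{\mathrm{fin}}(P)$ iff $P$ is finite. The crucial feature of $\varphi_{\mathrm{fin}}$ is that it is a first-order $\{\beta, P\}$-formula; in particular it refers only to $\beta$ and to the argument predicate, so it defines finiteness of a set $X \subseteq \rsq$ uniformly, irrespective of any further relations present in an expansion $(\rpm, (R_i)_{i \in I})$.

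First I would treat the general inclusion $\mathrm{WMSO} \leq \mathrm{MSO}$ over $\rp$. Given a $\mathrm{WMSO}$ sentence $\psi$, define a translation $\psi \mapsto \psi^{\dagger}$ by induction on the structure of $\psi$: leave atomic formulae, Boolean connectives and first-order quantifiers unchanged, and relativize each monadic quantifier by setting
\[
(\exists X\, \chi)^{\dagger} := \exists X\,\bigl(\varphi_{\mathrm{fin}}(X) \wedge \chi^{\dagger}\bigr), \qquad
(\forall X\, \chi)^{\dagger} := \forall X\,\bigl(\varphi_{\mathrm{fin}}(X) \rightarrow \chi^{\dagger}\bigr).
\]
A routine induction on $\psi$, invoking the defining property of $\varphi_{\mathrm{fin}}$ at each second-order quantifier, shows that for every expansion $\mf{M}$ of $\rp$ and every assignment we have $\mf{M} \models \psi$ (with monadic variables ranging over finite sets) iff $\mf{M} \models \psi^{\dagger}$ (with monadic variables ranging over all sets). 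Since $\psi^{\dagger}$ contains only unrestricted monadic quantifiers, it is an $\mathrm{MSO}$ sentence, and it is equivalent to $\psi$ over the class of expansions of $\rp$.

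For the inclusion $\forall\mathrm{WMSO} \leq \forall\mathrm{MSO}$ over $\rp$ the same relativization applies, but here one must additionally check that the prenex universal shape is preserved. A $\forall\mathrm{WMSO}$ sentence has the form $\forall X_1 \cdots \forall X_k\, \theta$ with $\theta$ first-order, and its translation is
\[
\forall X_1 \cdots \forall X_k\,\Bigl(\bigwedge_{1 \leq i \leq k} \varphi_{\mathrm{fin}}(X_i)\ \rightarrow\ \theta\Bigr).
\]
Because each $\varphi_{\mathrm{fin}}(X_i)$ is first-order, the matrix $\bigwedge_i \varphi_{\mathrm{fin}}(X_i) \rightarrow \theta$ is again first-order, so the whole sentence consists of a block of universal monadic quantifiers followed by a first-order formula, i.e.\ it lies in $\forall\mathrm{MSO}$. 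Correctness over expansions of $\rp$ is the $k$-fold special case of the induction above.

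The argument is essentially bookkeeping once Theorem~\ref{expressivitytheorem} is in hand. The only point requiring care is the observation in the last step that relativizing \emph{universal} monadic quantifiers by a first-order guard keeps the sentence inside the syntactically restricted fragment $\forall\mathrm{MSO}$, rather than merely inside $\mathrm{MSO}$; this is what makes the universal half of the statement slightly more than a direct corollary. Beyond verifying this syntactic invariant and discharging the straightforward induction, I expect no genuine obstacle.
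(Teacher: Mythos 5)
Your proposal is correct and is exactly the argument the paper intends: the corollary is stated without an explicit proof, and the conclusion section confirms that the translation is obtained by relativizing monadic quantifiers to the first-order finiteness formula $\varphi_1\wedge\varphi_2\wedge\varphi_3$ of Theorem \ref{expressivitytheorem}. Your additional observation that the first-order guard preserves the prenex universal shape, so the translation lands in $\forall\mathrm{MSO}$ and not merely $\mathrm{MSO}$, is the right point to check and is handled correctly.
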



\section{Undecidable theories of geometric structures with an affine betweenness relation}\label{main}

In this section we prove that the universal monadic second-order theory of any geometric structure $(T,\beta)$ that extends linearly in $2D$ is undecidable. In
addition we show that the universal monadic second-order theories of structures $(\rsq,\beta)$ with $n\geq 2$ are highly undecidable.
In fact, we show that the theories of structures extending linearly in $2D$ are $\Sigma_1^0$-hard,
while the theories of structures $(\rsq,\beta)$ with $n\geq 2$ are $\Pi_1^1$-hard---and therefore not even arithmetical. 
We establish the results by a reduction from the (recurrent) tiling problem to the problem of deciding
whether a particular $\{\beta\}$-sentence of monadic $\Sigma_1^1$ is \emph{satisfied} by $(T,\beta)$ (respectively, $\rp$).
The argument is based on interpreting supergrids in corresponding $\{\beta\}$-structures.
%
%
%

\subsection{Lines and sequences}
Let $T\subseteq \rsq$. Let $L$ be a line in $T$.
Any nonempty subset $Q$ of $L$ is called a
\emph{sequence} in $T$.
Let $E\subseteq T$ and $s,t\in T$. If $s\not=t$ and if $u\in E$ for all points $u\in T$ such that $\beta^*(s,u,t)$, we say that the 
points $s$ and $t$ are \emph{linearly $E$-connected} (in $(T,\beta)$). 
If there exists a point $v\in T\setminus E$ such that $\beta^*(s,v,t)$,
we say that $s$ and $t$ are \emph{linearly disconnected with respect to $E$} (in $(T,\beta)$).
\begin{definition}\label{discretelyspaceddfn}
Let $Q$ be a sequence in $T\subseteq \rsq$. Suppose that for each $s,t\in Q$ such that $s\not=t$, there exists a point $u\in T\setminus\{s\}$ such that 
\begin{enumerate}
\item
$\beta(s,u,t)$ and
\item
$\forall r\, \in\, T\ \bigl(\, \beta^*(s,r,u)\rightarrow r\not\in Q\, \bigr)$, i.e., the points $s$ and $u$ are linearly $(T\setminus Q)$-connected.
\end{enumerate}
Then we call $Q$ a $\emph{discretely spaced sequence in T}$.
\end{definition}
\begin{definition}\label{discretelyinfinitedfn}
Let $Q$ be a discretely spaced sequence in $T\subseteq \rsq$. Assume that there exists a point $s\in Q$ such that for each point $u\in Q$, there exists a point
 $v\in Q\setminus\{u\}$ such that $\beta(s,u,v)$.
Then we call the sequence $Q$ a \emph{discretely infinite sequence in $T$}. The point $s$ is
called a \emph{base point} of $Q$. 
\end{definition}
\begin{definition}\label{sequencewithzero}
Let $Q$ be a sequence in $T\subseteq \rsq$. Let $s\in Q$ be a
point such that there do not exist
points $u,v\in Q\setminus\{s\}$ such that $\beta(u,s,v)$.
Then we call $Q$ a \emph{sequence in $T$ with a zero}.
The point $s$ is a \emph{zero-point of $Q$}.
Notice that $Q$ may have up to two
zero-points.
\end{definition}
It is easy to see that a discretely infinite sequence has at most one zero point.
\begin{definition}\label{omegalikedfn}
Let $Q$ be a discretely infinite sequence in $T\subseteq\rsq$ with a zero.
Assume that for each $r\in T$ such that there exist points $s,u\in Q\setminus\{r\}$
with $\beta(s,r,u)$, there also exist points $s',u'\in Q\setminus\{r\}$ such that
\begin{enumerate}
\item
$\beta(s',r,u')$ and
\item
$\forall v\, \in\, T\setminus\{r\}\ \bigl(\, \beta^*(s',v,u')\rightarrow v\not\in Q\, \bigr)$.
\end{enumerate}
Then we call $Q$ an \emph{$\omega$-like sequence in $T$} (cf. Lemma \ref{nonstandardmodelsexcluded}).
\end{definition}
%
%
%
%
%
%
\begin{lemma}\label{omegalikesequencedefinability}
Let $P$ be a unary relation symbol. There is a first-order sentence $\varphi_{\omega}(P)$ of the vocabulary $\{\beta,P\}$ such that
for every $T\subseteq \rsq$ and for every expansion $(T,\beta,P)$ of $(T,\beta)$, we have $(T,\beta,P)\models\varphi_{\omega}(P)$ if and only if the
interpretation of $P$ is an $\omega$-like sequence in $T$.  
\end{lemma}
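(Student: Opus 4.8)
The plan is to translate each clause of Definitions \ref{discretelyspaceddfn}--\ref{omegalikedfn} directly into a first-order $\{\beta,P\}$-formula, exploiting the fact that the domain of any structure $(T,\beta,P)$ is exactly $T$. Consequently every quantifier ``$\forall r\in T$'' or ``$\exists u\in T$'' becomes a plain object quantifier, every condition ``$x\in Q$'' becomes $Px$, and every condition ``$x\in Q\setminus\{y\}$'' becomes $Px\wedge x\neq y$. Since strict betweenness is first-order definable by $\beta^*(s,t,u):=\beta(s,t,u)\wedge s\neq t\wedge t\neq u$, and since for points of $T$ collinearity computed in $(T,\beta)$ agrees with collinearity in $\rp$, all of the notions occurring in the definitions---betweenness, strict betweenness, linear $(T\setminus Q)$-connectedness, and membership in $Q$---are available in first-order logic over $\{\beta,P\}$.

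First I would capture the underlying notion of a sequence by setting
\[
\mathit{seq}(P):=\exists x\,Px\wedge\forall x\forall y\forall z\bigl((Px\wedge Py\wedge Pz)\rightarrow\mathit{collinear}(x,y,z)\bigr).
\]
The one nonroutine point of the whole argument is justifying this choice: being a nonempty subset of a line in $T$ must be shown equivalent, for $|Q|\geq 2$, to nonemptiness together with pairwise-triple collinearity. The forward direction is immediate. For the converse, if all triples of $Q$ are collinear and $Q$ contains two distinct points, those points span a line $L'$ of $\mathbb{R}^n$; every further point of $Q$ lies on $L'$ by collinearity; and $L'\cap T$ is readily checked to satisfy the three defining conditions of a line in $T$ (two distinct points, collinear triples, and betweenness-closure within $T$, since any $u\in T$ between or beyond two points of $L'\cap T$ lies on $L'$). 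Hence $Q\subseteq L'\cap T$ is a sequence. Because $\omega$-like sequences are discretely infinite, hence infinite, the degenerate case $|Q|\leq 1$ never arises, so $\mathit{seq}(P)$ is adequate for our purposes.

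The remaining clauses transcribe verbatim. For instance, Definition \ref{discretelyspaceddfn} becomes
\[
\varphi_{\mathit{ds}}:=\forall s\forall t\Bigl((Ps\wedge Pt\wedge s\neq t)\rightarrow\exists u\bigl(u\neq s\wedge\beta(s,u,t)\wedge\forall r(\beta^*(s,r,u)\rightarrow\neg Pr)\bigr)\Bigr),
\]
and in the same spirit I would write $\varphi_{\mathit{bp}}$ for the base-point clause of Definition \ref{discretelyinfinitedfn}, $\varphi_{z}$ for the zero-point clause of Definition \ref{sequencewithzero}, and $\varphi_{\omega}^{\ast}$ for the additional condition of Definition \ref{omegalikedfn}, each obtained by the mechanical substitution described above. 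Since an $\omega$-like sequence is by definition a discretely infinite sequence with a zero satisfying the extra condition, and discrete infinitude is discrete spacing plus a base point, the desired sentence is the conjunction
\[
\varphi_{\omega}(P):=\mathit{seq}(P)\wedge\varphi_{\mathit{ds}}\wedge\varphi_{\mathit{bp}}\wedge\varphi_{z}\wedge\varphi_{\omega}^{\ast}.
\]

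The verification that, for every $T\subseteq\rsq$ and every expansion $(T,\beta,P)$, we have $(T,\beta,P)\models\varphi_{\omega}(P)$ if and only if $P^{(T,\beta,P)}$ is an $\omega$-like sequence then proceeds by unwinding the satisfaction relation conjunct by conjunct against the corresponding definitional clause. I expect the spacing, base-point, zero, and $\omega$-likeness equivalences to be entirely routine, precisely because those clauses are already stated in the definitions using only $\beta$, $\beta^{*}$, and membership in $Q$. The only genuine obstacle is the equivalence underlying $\mathit{seq}(P)$ established above---that collinearity of all triples forces $Q$ onto a single line of $T$---together with the observation that infinitude of $\omega$-like sequences rules out the singleton exception.
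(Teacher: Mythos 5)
Your proposal is correct and follows essentially the same route as the paper: the same formula $\mi{sequence}(P)$ followed by a clause-by-clause first-order transcription of Definitions \ref{discretelyspaceddfn}--\ref{omegalikedfn}. You in fact supply more detail than the paper's (very terse) proof, in particular the justification that pairwise-triple collinearity of a non-singleton $P$ forces $P$ onto a line in $T$ and the observation that the singleton degenerate case is excluded by discrete infinitude --- both points the paper leaves implicit.
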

\begin{proof} 
Define
\[
\mi{sequence}(P):=\exists x\, Px\,
\wedge\, \forall x\forall y\forall z\,
\bigl(Px\wedge Py \wedge Pz\ \rightarrow\ \mi{collinear}(x,y,z)\bigr).
\]
The formula $sequence(P)$ states that $P$ is a sequence. By inspection of Definition \ref{discretelyspaceddfn}, it is
easy to see that there is a first-order formula $\psi$ such that the conjunction $sequence(P)\wedge\psi$ states
that $P$ is a discretely spaced sequence. Continuing this trend, it is straightforward to observe
that Definitions \ref{discretelyinfinitedfn}, \ref{sequencewithzero} and \ref{omegalikedfn} specify
first-order properties, and therefore there exists a first-order formula $\varphi_{\omega}(P)$ stating
that $P$ is an $\omega$-like sequence.
\end{proof}
\begin{definition}\label{successordefinition}
Let $P$ be a sequence in $T\subseteq \rsq$ and $s,t\in P$. The points $s,t$ are
called \emph{adjacent} with respect to $P$, if the
points are linearly $(T\setminus P)$-connected.
Let $E\subseteq P\times P$ be the set of pairs $(u,v)$ such that 
\begin{enumerate}
\item
$u$ and $v$ are adjacent with respect to $P$, and 
\item
$\beta(z,u,v)$ for some zero point $z$ of $P$.
\end{enumerate}
We call $E$ the \emph{successor relation} of $P$.
\end{definition}
We let $\mi{succ}$ denote the successor relation of $\mathbb{N}$, i.e.,
$\mi{succ}:=\{\ (i,j)\in\mathbb{N}\times\mathbb{N}\ |\ i+1 = j\ \}.$

\begin{lemma}\label{nonstandardmodelsexcluded}
Let $P$ be an $\omega$-like sequence in $T\subseteq\rsq$ and $E$ the successor relation of $P$.
There is an embedding from $(\mathbb{N},\mi{succ})$ into $(P,E)$ such that $0\in\mathbb{N}$ maps to the zero point of $P$.
If $T=\rsq$, then $(\mathbb{N},\mi{succ})$ is isomorphic to $(P,E)$.
\end{lemma}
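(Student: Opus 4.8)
\emph{Overall plan.} I would realise the required embedding as the forward orbit of the zero point under the successor function, and then, when $T=\mathbb{R}^n$, promote it to an isomorphism by showing the orbit exhausts $P$. First I would normalise the geometry. Since $P$ is $\omega$-like it is discretely infinite and has a zero, and a discretely infinite sequence has at most one zero point, so $P$ has a unique zero point $z$. Fixing the line $L\supseteq P$, I would observe that if two points of $P\setminus\{z\}$ lay on opposite sides of $z$ they would satisfy $\beta^*(a,z,b)$ and hence $\beta(a,z,b)$, contradicting that $z$ is a zero point; thus all of $P$ lies on one ray emanating from $z$. Writing $\mathrm{pos}(x):=d(z,x)$ gives an injective coordinate on $P$ for which $\beta(z,x,y)$ holds iff $\mathrm{pos}(x)\leq\mathrm{pos}(y)$. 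A short argument shows the base point $s$ of the discretely infinite sequence equals $z$: applying the defining property of $s$ to $u=z$ yields $v\neq z$ with $\beta(s,z,v)$, and were $s\neq z$ this would make $s,v\in P\setminus\{z\}$ straddle $z$, which is impossible. With this setup, $(u,v)\in E$ holds exactly when $v$ is the point of $P$ of least coordinate strictly greater than $\mathrm{pos}(u)$.

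\emph{Successors exist and are unique.} The heart of the argument is that every $u\in P$ has a unique $E$-successor. Uniqueness is immediate from collinearity: two $E$-successors both lie beyond $u$ on the ray and are both adjacent to $u$, so neither can lie strictly between $u$ and the other, forcing them to coincide. For existence, discrete infiniteness (with base point $z$) supplies some $v\in P$ with $\mathrm{pos}(v)>\mathrm{pos}(u)$, so $S:=\{\,x\in P:\mathrm{pos}(x)>\mathrm{pos}(u)\,\}$ is nonempty, and I must show $m:=\inf_{x\in S}\mathrm{pos}(x)$ is attained. Discrete spacing applied to $u$ and some $t\in S$ produces a witness $w\in T$ with $\beta(u,w,t)$ and $(u,w)$ free of $P$, so $m\geq\mathrm{pos}(w)>\mathrm{pos}(u)$. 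If the infimum were not attained, $S$ would accumulate at $m$ from above; if $w\in P$ then $w\in S$ and $\mathrm{pos}(w)=m$ already attains it, whereas if $w\notin P$ then $w$ is a point of $T$ flanked by $u$ below and by elements of $S$ above, so applying the $\omega$-likeness clause to $r:=w$ yields adjacent $s',u'\in P$ with $\beta(s',w,u')$ and $(s',u')$ free of $P$, whence the coordinate interval $(\mathrm{pos}(s'),\mathrm{pos}(u'))$ straddles $m$ and is $P$-free, contradicting accumulation. I expect this to be the main obstacle: discrete spacing alone forbids accumulation only at points of $P$, and it is precisely the $\omega$-likeness clause, quantifying over the intermediate $T$-point $w$, that rules out accumulation at that $T$-point.

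\emph{The embedding.} Given successors, define $f:\mathbb{N}\to P$ by $f(0)=z$ and $f(n{+}1)$ the $E$-successor of $f(n)$; this is total by the previous step. Then $\mathrm{pos}\circ f$ is strictly increasing, so $f$ is injective and $\mathrm{pos}(f(n))\uparrow L$ for some $L\in(0,\infty]$. By construction $(f(n),f(n{+}1))\in E$, and since $E$ is a function, $(f(i),f(j))\in E$ forces $f(j)=f(i{+}1)$, hence $j=i{+}1$. Therefore $(i,j)\in\mi{succ}\Leftrightarrow(f(i),f(j))\in E$, so $f$ is an embedding of $(\mathbb{N},\mi{succ})$ into $(P,E)$ sending $0$ to $z$, as required.

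\emph{Isomorphism when $T=\mathbb{R}^n$.} Finally I would show $f$ is onto in this case. Any candidate $b\in P\setminus f(\mathbb{N})$ has $\mathrm{pos}(b)>0$ and $\mathrm{pos}(b)\neq\mathrm{pos}(f(n))$ for all $n$. If $L=\infty$, choosing the least $n$ with $\mathrm{pos}(f(n))>\mathrm{pos}(b)$ gives $\mathrm{pos}(f(n{-}1))<\mathrm{pos}(b)<\mathrm{pos}(f(n))$, placing $b$ strictly between the adjacent pair $f(n{-}1),f(n)$, which is impossible. If $L<\infty$, the point $r_L\in\mathbb{R}^n$ at coordinate $L$ exists because $T=\mathbb{R}^n$; were any point of $P$ to lie above $L$, then $r_L$ would be flanked by $P$ on both sides and $\omega$-likeness would yield an adjacent straddling pair whose coordinate interval contains infinitely many $f(n)$, a contradiction, while discrete infiniteness forbids $r_L$ itself from lying in $P$. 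Hence no point of $P$ has coordinate $\geq L$, every coordinate in $[0,L)$ lies in some $[\mathrm{pos}(f(n)),\mathrm{pos}(f(n{+}1)))$, and the stray $b$ is again trapped strictly between consecutive $f(n)$'s. Thus $f(\mathbb{N})=P$. The hypothesis $T=\mathbb{R}^n$ enters exactly to guarantee $r_L\in T$; for general $T$ the accumulation coordinate may be absent from $T$, the successor chain can stall below surplus points of $P$, and only the embedding survives, which is why the isomorphism is claimed solely for $T=\mathbb{R}^n$.
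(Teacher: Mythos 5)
Your proof is correct and takes essentially the same route as the paper's: the embedding is the successor orbit of the unique zero point (the paper simply declares the existence and uniqueness of successors ``straightforward,'' which your infimum argument fills in), and surjectivity for $T=\mathbb{R}^n$ is obtained exactly as in the paper by taking the least upper bound $r_L$ of the orbit, excluding $r_L$ from $P$, and using $\omega$-likeness to exclude any point of $P$ beyond it. The only slip is attributional: $r_L\notin P$ follows from discrete \emph{spacing} applied to the pair $(r_L,z)$, not from discrete infiniteness, but the needed fact is available and the rest of the argument stands.
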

\begin{proof}
We denote by $i_0$ the unique zero point of $P$.
Since $P$ is a discretely infinite sequence, it has a base point.
Clearly $i_0$ has to be the only base point of $P$.
It is straightforward to establish that since $P$ is an $\omega$-like sequence with the base point $i_0$, there exists a sequence
 $(a_i)_{i\in \mathbb{N}}$ of points $a_i\in P$ such that $i_0=a_0$ and $a_{i+1}$ is the unique $E$-successor of $a_{i}$ for all $i\in \mathbb{N}$.
Define the function $h:\mathbb{N}\rightarrow P$ such that $h(i) = a_i$ for all $i\in\mathbb{N}$.
It is easy to see that $h$ is an embedding of $(\mathbb{N},\mi{succ})$ into $(P,E)$.
Assume then that $T=\rsq$. We shall show that the function $h:\mathbb{N}\longrightarrow P$ is a surjection.
Let $d$ denote the canonical metric of $\mathbb{R}$, and let $d_R$ be the
restriction of the canonical metric of $\rsq$ to the line $R$ in $\rsq$ such that $P\subseteq R$.
Let $g:\mathbb{R}\longrightarrow R$ be the isometry from $(\mathbb{R},d)$ to $(R,d_R)$ such
that $g(0) = i_0 = h(0)$ and such
that for all $r\in\mi{ran}(h)$, we have $\beta\bigl(i_0,g(1),r\bigr)$ or $\beta\bigl(i_0,r,g(1)\bigr)$.
Let $(R,\leq^R)$ be the
structure, where
$\leq^{R}\ =\ \{\ (u,v)\in R\times R\ |\ g^{-1}(u)\, \leq^{\mathbb{R}}\, g^{-1}(v)\ \}$.
If $\mi{ran}(h)$ is not bounded from above w.r.t. $\leq^R$, then $h$
must be a surjection. Therefore
assume that $\mi{ran}(h)$ is bounded above. By the Dedekind completeness of the reals, there
exists a least upper bound $s\in R$ of $\mi{ran}(h)$ w.r.t. $\leq^R$.
Notice that since $h$ is an 
embedding of $(\mathbb{N},\mi{succ})$ into $(P,E)$, we have $s\not\in\mi{ran}(h)$.
Due to the definition of $E$, it is sufficient to show that $\{\, t\in P\ |\ s\leq^R t\ \} = \emptyset$ in order to conclude
that $h$ maps onto $P$.
Assume that the least upper bound $s$ belongs to the set $P$. Since $P$ is a discretely spaced sequence,
there is a point $u\in\mathbb{R}^n\setminus\{s\}$ such that $\beta(s,u,i_0)$ and
$\forall r\in\mathbb{R}^n
\bigl(\beta^*(s,r,u)
\rightarrow r\not\in P\bigl)$.
Now $u<^R s$ and the points $u$ and $s$ are linearly $(\mathbb{R}^n\setminus P)$-connected,
implying that $s$ cannot be the
least upper bound of $\mi{ran}(h)$. This is a
contradiction. Therefore $s\not\in P$.
Assume, ad absurdum, that there exists a point $t\in P$
such that $\beta(i_0,s,t)$.
Now, since $P$ is an $\omega$-like sequence, there
exists points $u',v'\in P\setminus\{s\}$
such that
$\beta(u',s,v')$ and $\forall r\in \rsq\bigl(\beta^*(u',r,v')\rightarrow r\not\in P\bigr)$.
We have $\beta(s,u',i_0)$ or $\beta(s,v',i_0)$. Assume, by symmetry, that $\beta(s,u',i_0)$.
Now $u'<^R s$, and the points $u'$ and $s$ are
linearly $(\rsq\setminus P)$-connected.
Hence, since  $s\not\in \mi{ran}(h)$, we conclude that
$s$ is not the least upper bound of $\mi{ran}(h)$.
This is a contradiction.
\end{proof}
%
%
%
\subsection{Geometric structures $(T,\beta)$ with an undecidable monadic $\Pi_1^1$-theory}
\begin{figure}
\centering
\includegraphics[scale=1]{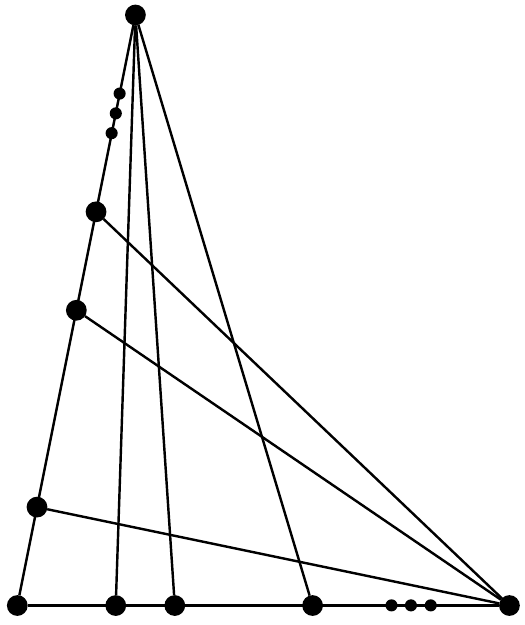}
\caption{Illustration of how the grid is interpreted in a Cartesian frame.}
\label{fig:triangle}
\end{figure}
Let $Q$ be an $\omega$-like sequence in $T\subseteq \rsq$ and let $q_0$ be the unique zero point of $Q$. Assume there exists a point $q_e\in T\setminus Q$
such that $\beta(q_0,q,q_e)$ holds for all $q\in Q$. We call $Q\cup\{q_e\}$ \emph{an $\omega$-like sequence with an endpoint in $T$}. The point $q_e$ is
the \emph{endpoint of $Q\cup\{q_e\}$}. Notice that the endpoint $q_e$ is the only point $x$ in $Q\cup\{q_e\}$ such that the following conditions hold.
\begin{enumerate}
\item
There does not exist points $s,t\in Q\cup\{q_e\}$ such that $\beta^*(s,x,t)$.
\item
$\forall y z \in Q\cup\{q_e\}\Bigl(\, \beta^*(x,y,z)\rightarrow \exists v\in Q\cup\{q_e\}\bigl(\beta^*(x,v,y)\Bigr)$.
\end{enumerate}
\begin{definition}
Let $P$ and $Q$ be $\omega$-like sequences with an endpoint in $T\subseteq \rsq$.
Let $p_e$ and $q_e$ be the endpoints of $P$ and $Q$, respectively. Assume that the following conditions hold.
\begin{enumerate}
\item There exists a point $z\in P\cap Q$ such that $z$ is the zero-point of both $P\setminus\{p_e\}$ and $Q\setminus\{q_e\}$.
\item There exists lines $L_P$ and $L_Q$ in $T$ such that $L_P\neq L_Q$, $P\subseteq L_P$ and $Q\subseteq L_Q$.
\item For each point $p\in P\setminus\{p_e\}$ and $q\in Q\setminus\{q_e\}$, the unique lines $L_p$ and $L_q$ in $T$ such
that $p,q_e\in L_p$ and $q,p_e\in L_q$ intersect.
\end{enumerate} 
We call the structure $(T,\beta,P,Q)$ a
\emph{Cartesian frame}.
\end{definition}
\begin{lemma}\label{boxedcartesianpicturesdefinablelemma}
Let $T\subseteq\rsq$, $n\geq 2$, and let $\mathcal{C}$ be the class of all expansions $(T,\beta,P,Q)$ of $(T,\beta)$ by
unary relations $P$ and $Q$.
The class of Cartesian frames with the domain $T$ is definable with
respect to $\mathcal{C}$ by a first-order sentence.
\end{lemma}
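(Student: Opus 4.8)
The plan is to show that each of the three defining conditions of a Cartesian frame is first-order expressible with respect to $\mathcal{C}$, and then take the conjunction. The starting point is Lemma \ref{omegalikesequencedefinability}, which already gives a first-order sentence $\varphi_\omega(P)$ saying that $P$ is an $\omega$-like sequence. First I would observe that the property of being ``an $\omega$-like sequence with an endpoint'' is also first-order definable: the endpoint $q_e$ is characterised by the two bulleted conditions stated just before the definition (it is the unique point not strictly between two points of the set, and every point strictly beyond it in the sequence direction has a predecessor in the set), both of which are explicitly written as $\{\beta\}$-formulae. So I would first define a formula $\varphi_{\omega e}(P)$ asserting that $P$ is an $\omega$-like sequence together with its endpoint, by saying that $P$ has a distinguished point $p_e$ satisfying those conditions and that $P\setminus\{p_e\}$ is $\omega$-like; since we cannot literally quantify over the set $P\setminus\{p_e\}$ in first-order logic, I would instead relativise $\varphi_\omega$ so that all its point-quantifiers range over points of $P$ distinct from the (first-order definable) endpoint.

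Next I would handle the three numbered conditions of the definition in turn. Condition (1), that there is a common point $z\in P\cap Q$ which is the zero-point of both $P\setminus\{p_e\}$ and $Q\setminus\{q_e\}$, is first-order: ``zero-point of a sequence'' is a first-order property by Definition \ref{sequencewithzero} (no point of the sequence is strictly between two others), relativised to $P$ minus its endpoint, and $P\cap Q$ membership is just $Px\wedge Qx$. Condition (2), that $P$ and $Q$ lie on distinct lines $L_P\neq L_Q$ of $T$, I would express by saying the points of $P$ are collinear, the points of $Q$ are collinear, and $P\cup Q$ is not collinear (equivalently, there exist $p,p'\in P$ and $q\in Q$ that are not collinear); this forces the two lines to differ. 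Condition (3) is the genuinely geometric part: for each $p\in P\setminus\{p_e\}$ and $q\in Q\setminus\{q_e\}$, the line $L_p$ through $p$ and $q_e$ and the line $L_q$ through $q$ and $p_e$ must intersect in $T$. Here I would exploit that ``intersect'' for lines in $T$ was defined to mean the two lines share a point of $T$, so the statement becomes: there exists $v\in T$ lying on the $p$--$q_e$ line and on the $q$--$p_e$ line, which is expressible using $\mi{collinear}$ (i.e.\ $\mi{collinear}(p,q_e,v)\wedge\mi{collinear}(q,p_e,v)$), quantified universally over $p\in P\setminus\{p_e\}$ and $q\in Q\setminus\{q_e\}$ and existentially over $v$.

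The main obstacle I expect is bookkeeping the ``$\setminus\{p_e\}$'' and ``$\setminus\{q_e\}$'' restrictions uniformly: every subformula that was stated for the bare $\omega$-like sequences must be carefully relativised so that bound variables meant to range over the sequence proper exclude the endpoint, and one must verify that the first-order definitions of zero-point, adjacency, and the $\omega$-like condition remain faithful under this relativisation. A secondary subtlety is condition (3): I must check that expressing line-intersection via a common collinear witness $v$ genuinely captures the intended ``$L_p$ and $L_q$ intersect'' and does not accidentally allow the degenerate case where the two lines coincide; this is ruled out because $p,q_e$ determine a line transverse to the one through $q,p_e$ once condition (2) forces $L_P\neq L_Q$, but I would confirm that the relevant points are pairwise distinct so that $\mi{collinear}$ pins down honest lines rather than collapsing. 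Having assembled the three first-order conditions together with $\varphi_{\omega e}(P)\wedge\varphi_{\omega e}(Q)$, their conjunction is the required defining sentence, and the verification that it holds in $(T,\beta,P,Q)$ exactly for Cartesian frames is then a direct unwinding of the definitions.
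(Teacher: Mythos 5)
Your proposal is correct and follows exactly the route the paper intends: the paper's own proof is just the one-line remark ``Straightforward by virtue of Lemma \ref{omegalikesequencedefinability},'' and your argument is a faithful working-out of that, handling the endpoint via the two characterising conditions stated before the definition and expressing the three frame conditions with $\mi{collinear}$. The subtleties you flag (relativising $\varphi_\omega$ to $P$ minus its endpoint, and checking that a common collinear witness really gives intersection of two \emph{distinct} lines, which follows from condition (2)) are precisely the details the paper leaves to the reader, and your treatment of them is sound.
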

\begin{proof}
Straightforward by virtue of Lemma \ref{omegalikesequencedefinability}.
\end{proof}
\begin{lemma}\label{gridinterpretablelemma}
Let $T\subseteq\mathbb{R}^n$, $n\geq 2$. Let $\mathcal{C}$ be the class of
Cartesian frames with the domain $T$, and assume that $\mathcal{C}$ is nonempty.
Let $\mathcal{G}$ be the class of supergrids and $\mf{G}$ the grid.
There is a class $\mathcal{A}\subseteq \mathcal{G}$ that is uniformly
first-order interpretable in the class $\mathcal{C}$,
and furthermore, $\mf{G}\in\mathcal{A}$.
\end{lemma}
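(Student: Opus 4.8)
The plan is to interpret a supergrid by realising its vertices as the intersection points of two families of ``diagonal'' lines furnished by the Cartesian frame. Fix a Cartesian frame $\mf{B}=(T,\beta,P,Q)\in\mathcal{C}$ with endpoints $p_e\in P$ and $q_e\in Q$ and common zero-point $z$. The intended vertex $g_{i,j}$ corresponding to the pair (the $i$-th point $p$ of $P\setminus\{p_e\}$, the $j$-th point $q$ of $Q\setminus\{q_e\}$) is the unique point lying simultaneously on the line through $p$ and $q_e$ and on the line through $q$ and $p_e$. By Lemma~\ref{omegalikesequencedefinability} and Definition~\ref{successordefinition} there are first-order $\{\beta,P,Q\}$-formulae $\chi_P(w),\chi_Q(w)$ defining the endpoints of $P,Q$ and first-order formulae $\psi_{E_P}(u,v),\psi_{E_Q}(u,v)$ defining the successor relations $E$ of $P$ and $Q$ (adjacency and betweenness-with-the-zero-point are evidently first-order). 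I take the domain formula to be
\begin{align*}
\varphi_{\mi{Dom}}(x):=\exists p\,q\,p_e\,q_e\big(&Pp\wedge Qq\wedge\chi_P(p_e)\wedge\chi_Q(q_e)\wedge p\neq p_e\wedge q\neq q_e\\
&{}\wedge\mi{collinear}(p,x,q_e)\wedge\mi{collinear}(q,x,p_e)\big),
\end{align*}
whose witnesses $(p,q)$ are unique for each satisfying $x$ because two distinct lines through $q_e$ meet only at $q_e$, and likewise for $p_e$.

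The horizontal and vertical successor relations are then interpreted by keeping one diagonal fixed and advancing the other along the appropriate successor relation:
\begin{align*}
\varphi_H(x,y):=\exists p\,p'\,q\,p_e\,q_e\big(&\psi_{E_P}(p,p')\wedge Qq\wedge q\neq q_e\wedge\chi_P(p_e)\wedge\chi_Q(q_e)\\
&{}\wedge\mi{collinear}(p,x,q_e)\wedge\mi{collinear}(q,x,p_e)\\
&{}\wedge\mi{collinear}(p',y,q_e)\wedge\mi{collinear}(q,y,p_e)\big),
\end{align*}
and $\varphi_V$ is the symmetric formula using $\psi_{E_Q}$ and a fixed point of $P\setminus\{p_e\}$. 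For $\mf{B}\in\mathcal{C}$ I let $F(\mf{B})$ be the $\{H,V\}$-structure whose domain is $\{b\in T\mid\mf{B}\models\varphi_{\mi{Dom}}(b)\}$ with $H,V$ interpreted by $\varphi_H,\varphi_V$ (so $f$ is the identity), and I set $\mathcal{A}:=F(\mathcal{C})$; this makes $\mathcal{A}$ uniformly first-order interpretable in $\mathcal{C}$ by construction.

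The core verification is that every $F(\mf{B})$ is a supergrid. First, condition~3 in the definition of a Cartesian frame guarantees that for each admissible pair $(p,q)$ the two diagonals actually intersect in $T$, so a vertex $g_{i,j}$ exists for every pair; the uniqueness observation above shows $(p,q)\mapsto g_{p,q}$ is a bijection onto the vertices and that $\varphi_H,\varphi_V$ pick out exactly the ``same fixed diagonal, $E$-successor on the moving diagonal'' pairs. Next, by Lemma~\ref{nonstandardmodelsexcluded} there are embeddings $(\mathbb{N},\mi{succ})\hookrightarrow(P,E_P)$ and $(\mathbb{N},\mi{succ})\hookrightarrow(Q,E_Q)$ sending $0$ to the shared zero-point; composing with the vertex bijection yields a map $\mf{G}\to F(\mf{B})$. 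Since each standard element of $P$ (resp.\ $Q$) has its $E$-successor again standard, this map is an isomorphism onto the induced substructure on the ``standard'' vertices, with no edges escaping to nonstandard vertices. Hence $\mf{G}$ is a substructure of $F(\mf{B})$, i.e.\ $F(\mf{B})\in\mathcal{G}$, so $\mathcal{A}\subseteq\mathcal{G}$. I expect this matching of the combinatorial successor relations $E_P,E_Q$ to the grid edges, together with checking that no spurious $H,V$-edges appear among standard vertices, to be the main obstacle; condition~3 and the functionality of $E$ from Definition~\ref{successordefinition} are the tools that resolve it.

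Finally, to see $\mf{G}\in\mathcal{A}$ it suffices to exhibit one $\mf{B}_0\in\mathcal{C}$ with $F(\mf{B}_0)\cong\mf{G}$, i.e.\ a frame in which $(P,E_P)$ and $(Q,E_Q)$ are isomorphic to $(\mathbb{N},\mi{succ})$. I would construct $P,Q$ explicitly as geometric sequences accumulating at their endpoints---for instance points at distances $1-2^{-k}$ from $z$ along two distinct lines, with $p_e,q_e$ the limit points---and check directly that such a configuration is an $\omega$-like sequence with an endpoint and satisfies the three frame conditions (the required diagonal intersections being ordinary affine intersections of lines with the indicated rational coefficients, hence lying in $T$). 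For $T=\rsq$ this is automatic, since Lemma~\ref{nonstandardmodelsexcluded} makes \emph{every} $\omega$-like sequence standard and therefore forces $F(\mf{B})\cong\mf{G}$ for all $\mf{B}\in\mathcal{C}$; in general the explicit converging sequences force $(P,E_P),(Q,E_Q)\cong(\mathbb{N},\mi{succ})$ by construction, giving $F(\mf{B}_0)\cong\mf{G}$ and completing the proof.
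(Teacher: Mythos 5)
Your proposal is correct and follows essentially the same route as the paper: the supergrid is interpreted on the intersection points of the two pencils of lines through $q_e$ and $p_e$, the successor formulae pick out adjacent intersection points, Lemma \ref{nonstandardmodelsexcluded} supplies the embedding of $\mf{G}$, and $\mathcal{A}$ is taken to be the class of interpreted structures. The only substantive deviations are cosmetic---you define $\varphi_H$ by lifting the successor relation of $P\setminus\{p_e\}$ through the vertex bijection where the paper instead requires that no domain point lie strictly between $u$ and $v$ on the line towards $p_e$, and you are in fact more explicit than the paper (which merely asserts ``Clearly $\mf{G}\in\mathcal{A}$'') in observing that membership of the genuine grid in $\mathcal{A}$ requires exhibiting a frame whose sequences are standard.
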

\begin{proof}
Let $\mf{C} = (T,\beta,P,Q)$ be a Cartesian frame. Let $p_e\in P$ and $q_e\in Q$ be the endpoints of $P$ an $Q$, respectively.
We shall interpret a supergrid $\mf{G}_{\mf{C}}$ in the Cartesian frame $\mf{C}$.
The domain of the interpretation of $\mf{G}_{\mf{C}}$ in $\mf{C}$ will be the set of points where the lines that
connect the points of $P\setminus\{p_e\}$ to $q_e$ and the
lines that connect the
points of $Q\setminus\{q_e\}$ to $p_e$ intersect.
First let us define the following formula which states in $\mf{C}$ that $x$ is the endpoint of $P$.
\[
\mi{end}_P(P,Q,x)\ :=\ Px \wedge \neg Qx \wedge \neg \exists y \exists z\bigr(Py\wedge Pz\wedge \beta^*(y,x,z)\bigl)
\]
In the following, we let atomic expressions of the type $x\not= p_e$ and $\beta^*(x,y,q_e)$ abbreviate
corresponding first-order formulae $\exists z\bigl(\mi{end}_P(P,Q,z)\wedge x\not= z\bigr)$ and $\exists z\bigl(\mi{end}_Q(Q,P,z)\wedge \beta^*(x,y,z)\bigr)$ of
the vocabulary $\{\beta,P,Q\}$ of $\mf{C}$. We define 
\begin{align*}
\varphi_{\mi{Dom}}(u)\ :=&\ u\not= p_e\wedge u\not= q_e\\
& \wedge \Big(Pu\vee Qu \vee\ \exists x y\big(Px\wedge x\not= p_e
\wedge Qy\wedge y\not= q_e\wedge \beta(x,u,q_e)\wedge\beta(y,u,p_e)\big)\Big),\\
\varphi_{H}(u,v)\ :=&\ 
\exists x \bigl(Qx\, \wedge\, \beta(x,u,v)\, \wedge\, \beta^*(u,v,p_e)\bigr)
\wedge\, \forall r\bigl(\, \beta^*(u,r,v)\, \rightarrow\, \neg \varphi_{\mi{Dom}}(r)\, \bigr),\\
\varphi_{V}(u,v)\ :=&\ 
\exists x \bigl(Px\, \wedge\, \beta(x,u,v)\, \wedge\, \beta^*(u,v,q_e)\bigr)
\wedge\, \forall r\bigl(\, \beta^*(u,r,v)\, \rightarrow\, \neg \varphi_{\mi{Dom}}(r)\, \bigr).\\
\end{align*}
Call $D_\mf{C} := \{\ r\in T\ |\ \mf{C}\models\varphi_{\mi{Dom}}(r)\ \}$ and
define the
structure $\mf{D}_\mf{C} = (D_\mf{C},H^{\mf{D}_\mf{C}},V^{\mf{D}_\mf{C}})$, where 
$$H^{\mf{D}_\mf{C}}\ :=\ \{\ (s,t)\in D_\mf{C}\times D_\mf{C}\ |\ \mathfrak{C}\models\varphi_H(s,t)\ \},$$
and analogously for $V^{\mf{D}_\mf{C}}$.
By Lemma \ref{nonstandardmodelsexcluded}, it is easy to see that there
exists an injection $f$ from the domain of the grid
$\mf{G}=(G,H,V)$ to $D_\mf{C}$ such that 
the following three conditions hold for all $u,v\in G$.
\begin{enumerate}
\item
$(u,v)\in H\ \Leftrightarrow\ \varphi_{H}\bigl(f(u),f(v)\bigr)$,
\item
$(u,v)\in V\ 
\Leftrightarrow\ \varphi_{V}\bigl(f(u),f(v)\bigr)$.
\end{enumerate}
Hence there is a supergrid $\mf{G}_\mf{C}=(G_\mf{C},H,V)$ such that
there exists an isomorphism $f$
from $G_\mf{C}$ to $D_\mf{G}$ such that the
above two conditions hold.

Let $\mathcal{A}:=\{\ \mf{G}_{\mf{C}}\in \mathcal{G}
\ |\ \mf{C}\text{ is a Cartesian frame with the domain $T$\ } \}$.
Clearly $\mf{G}\in\mathcal{A}$, and 
furthermore, $\mathcal{A}$ is uniformly
first-order interpretable in the
class of Cartesian frames with the domain $T$.
\end{proof}
\begin{lemma}\label{recurrencegridinterpr}
Let $n\geq 2$ be an integer. The recurrence grid $\mf{R}$ is
uniformly first-order interpretable in the class of
Cartesian frames with the domain $\mathbb{R}^n$.
\end{lemma}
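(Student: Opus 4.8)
The plan is to build on Lemma~\ref{gridinterpretablelemma}, which already interprets the bare grid $\mf{G}$ (with its relations $H,V$) in the class of Cartesian frames with domain $\mathbb{R}^n$. For the recurrence grid $\mf{R}=(\mf{G},R)$ the only new task is to supply a first-order $\{\beta,P,Q\}$-formula $\varphi_R(u,v)$ that defines the recurrence relation $R=\{((0,i),(0,j))\mid i<j\}$ on the interpreted domain $D_{\mf{C}}$, together with the domain formula $\varphi_{\mi{Dom}}$ and relation formulae $\varphi_H,\varphi_V$ inherited verbatim from the previous lemma. Crucially, because the target structure is now the \emph{single} recurrence grid $\mf{R}$ rather than a whole class $\mathcal{A}$ of supergrids, I must also verify that the interpretation over the specific domain $\mathbb{R}^n$ yields a structure \emph{isomorphic} to $\mf{R}$, not merely one containing it as a substructure. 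This is exactly where the hypothesis $T=\mathbb{R}^n$ pays off: by the second sentence of Lemma~\ref{nonstandardmodelsexcluded}, over $\mathbb{R}^n$ each $\omega$-like sequence is genuinely isomorphic to $(\mathbb{N},\mi{succ})$, so the interpreted grid has no nonstandard points and is isomorphic to $\mf{G}$.

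First I would fix an arbitrary Cartesian frame $\mf{C}=(\mathbb{R}^n,\beta,P,Q)$ and recall from Lemma~\ref{gridinterpretablelemma} that the points of $D_{\mf{C}}$ are the intersection points of the $P$-lines through $q_e$ with the $Q$-lines through $p_e$, and that $f$ maps $\mf{G}$ isomorphically onto $\mf{D}_{\mf{C}}$ with $f(i,j)$ sitting at the intersection indexed by the $i$-th point of $P\setminus\{p_e\}$ and the $j$-th point of $Q\setminus\{q_e\}$. The leftmost column $\{(0,j)\mid j\in\mathbb{N}\}$ of the grid corresponds to the intersection points lying on the line $L_Q$ through $q_e$, i.e.\ to the points of $Q\setminus\{q_e\}$ themselves. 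So the first step is to write a formula $\mi{leftcol}(u)$ picking out these points; the natural choice is $Qu\wedge u\neq q_e$, phrased using the $\mi{end}_Q$ abbreviation exactly as in the previous proof. The second step is to express, for two such points $u,v$, that $u$ is \emph{strictly below} $v$ in the $V$-ordering, which geometrically means $\beta^*(z,u,v)$ where $z$ is the common zero-point, equivalently that $u$ lies strictly between the zero-point and $v$ along $L_Q$.

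I would therefore define
\[
\varphi_R(u,v)\ :=\ \mi{leftcol}(u)\wedge\mi{leftcol}(v)\wedge \exists z\bigl(\mi{end}_Q(Q,P,q)\wedge\ \cdots\ \bigr),
\]
where the omitted part asserts that $u$ and $v$ share the zero-point $z$ of $Q\setminus\{q_e\}$ and that $\beta^*(z,u,v)$; all of these are first-order over $\{\beta,P,Q\}$ since zero-points are first-order definable (Definition~\ref{sequencewithzero} and Lemma~\ref{omegalikesequencedefinability}). The verification then splits into two routine checks: that $\varphi_R$ holds of $(f(0,i),f(0,j))$ precisely when $i<j$, which follows because the successor relation $E$ of $Q$ linearly orders $Q\setminus\{q_e\}$ in the $\beta^*$-order from its zero-point; and that the resulting interpreted structure is $\mf{R}$ rather than some proper extension, which is where I invoke the $T=\mathbb{R}^n$ clause of Lemma~\ref{nonstandardmodelsexcluded} to rule out extra points on either axis. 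Collecting $\varphi_{\mi{Dom}},\varphi_H,\varphi_V,\varphi_R$ gives a uniform first-order interpretation of $\mf{R}$ in the class of Cartesian frames with domain $\mathbb{R}^n$.

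The main obstacle is not the definability of $R$ itself---that reduces to defining a strict linear order along one axis of an $\omega$-like sequence, which is manifestly first-order---but rather the \emph{isomorphism} (as opposed to embedding) claim implicit in interpreting the single structure $\mf{R}$. Over an arbitrary $T$, Lemma~\ref{nonstandardmodelsexcluded} only guarantees an embedding, so the interpreted ``grid'' could be a proper supergrid with spurious points and the recurrence relation $R$ might pick up unintended pairs among nonstandard elements; the completeness of the reals (used in the surjectivity half of Lemma~\ref{nonstandardmodelsexcluded}) is precisely what forces $P$ and $Q$ to be order-isomorphic to $\mathbb{N}$, pinning the interpretation down to exactly $\mf{R}$. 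I would make sure to state this dependence explicitly rather than treating it as routine.
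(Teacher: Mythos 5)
Your proposal is correct and takes essentially the approach the paper intends: its own proof of this lemma is just the one line ``Straightforward by Lemma~\ref{nonstandardmodelsexcluded} and the proof of Lemma~\ref{gridinterpretablelemma}'', and you correctly identify both ingredients --- reusing $\varphi_{\mi{Dom}},\varphi_H,\varphi_V$, defining $R$ on the leftmost column $Q\setminus\{q_e\}$ via the betweenness order from the zero-point, and invoking the $T=\rsq$ clause of Lemma~\ref{nonstandardmodelsexcluded} to get an isomorphism onto $\mf{R}$ rather than a mere embedding. One small nit: your clause $\beta^*(z,u,v)$ should be $\beta(z,u,v)\wedge u\neq v$, since for the pairs $\bigl((0,0),(0,j)\bigr)\in R$ one has $u=z$ and strict betweenness fails.
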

\begin{proof}
Straightforward by Lemma \ref{nonstandardmodelsexcluded} and the proof of Lemma \ref{gridinterpretablelemma}.
\end{proof}
\begin{theorem}\label{infinitelemma}
Let $T\subseteq \rsq$ be a set and let $\beta$ be the corresponding betweenness relation. Assume that $T$ extends linearly in $2D$.
The monadic $\Pi_1^1$-theory of $(T,\beta)$ is $\Sigma_1^0$-hard.
\end{theorem}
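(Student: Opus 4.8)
The plan is to reduce the complement of the tiling problem to membership in the monadic $\Pi^1_1$-theory of $(T,\beta)$. Since the tiling problem $\mathcal{T}$ is $\Pi^0_1$-complete, its complement is $\Sigma^0_1$-complete, so a computable reduction of the latter into the theory yields $\Sigma^0_1$-hardness. Concretely, given a tiling input $T_0\in\mathcal{F}$, I will effectively produce a $\{\beta\}$-sentence $\Phi_{T_0}$ of monadic $\Sigma^1_1$ with the property that $(T,\beta)\models\Phi_{T_0}$ iff the grid $\mf{G}$ is $T_0$-tilable. Then $\neg\Phi_{T_0}$ is a monadic $\Pi^1_1$-sentence lying in the theory of $(T,\beta)$ exactly when $\mf{G}$ is \emph{not} $T_0$-tilable, and $T_0\mapsto\neg\Phi_{T_0}$ is the desired reduction.

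The sentence $\Phi_{T_0}$ is assembled from the machinery already in place. Let $I$ be the uniform interpretation of the class $\mathcal{A}$ of supergrids in the class of Cartesian frames with domain $T$ furnished by Lemma \ref{gridinterpretablelemma}, let $\varphi_{T_0}$ be the first-order $\{H,V\}\cup T_0$-sentence of Lemma \ref{tilingdefinablelemma}, and let $\chi(P,Q)$ be the first-order sentence defining Cartesian frames (Lemma \ref{boxedcartesianpicturesdefinablelemma}). Forming the $T_0$-expansion of the interpretation $I$ in the sense of the $\mathcal{P}$-interpretations of Section \ref{section2} (with $\mathcal{P}=T_0$, the unary tile symbols), I set
\[
\Phi_{T_0}\ :=\ \exists P\,\exists Q\,\exists (P_t)_{t\in T_0}\,\bigl(\chi(P,Q)\wedge I(\varphi_{T_0})\bigr).
\]
The conjunct $\chi(P,Q)$ forces the reduct $(T,\beta,P,Q)$ to be a Cartesian frame, so Lemma \ref{uniforminterpretationlemma} (applied to $\varphi_{T_0}$) together with Lemma \ref{tilingdefinablelemma} gives that $(T,\beta)\models\Phi_{T_0}$ iff some supergrid $\mf{G}_{\mf{C}}\in\mathcal{A}$ is $T_0$-tilable.

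It then remains to replace ``some member of $\mathcal{A}$ is tilable'' by ``$\mf{G}$ is tilable''. Every $\mf{A}\in\mathcal{A}$ is a supergrid, hence contains $\mf{G}$ as a substructure, so restricting any tiling of $\mf{A}$ to $\mf{G}$ produces a tiling of $\mf{G}$; conversely, since $\mf{G}\in\mathcal{A}$ by the last clause of Lemma \ref{gridinterpretablelemma}, a tiling of $\mf{G}$ transfers to the supergrid interpreting it. This is precisely the observation, recorded before the section header, that $\mf{G}$ is $T_0$-tilable iff some supergrid is. Consequently $(T,\beta)\models\Phi_{T_0}$ iff $\mf{G}$ is $T_0$-tilable, and the reduction is complete once the hypotheses of the cited lemmas hold.

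The one hypothesis still to be discharged is the nonemptiness of the class $\mathcal{C}$ of Cartesian frames with domain $T$ required by Lemma \ref{gridinterpretablelemma}, and this is exactly where ``$T$ extends linearly in $2D$'' is used; I expect this geometric construction to be the main obstacle. Fixing the linearly regular $2$-dimensional flat $S$, the radius $\epsilon$ and the point $x$ with $\{u\in S\mid d(x,u)<\epsilon\}\subseteq T$ witnessing the hypothesis, I work inside this disk. I choose two distinct lines through a common point $z$ (the shared zero-point) and, using \emph{linear completeness} to select points accumulating towards interior endpoints $p_e$ and $q_e$, lay out two $\omega$-like sequences $P$ and $Q$ with endpoints along these lines, arranged as a convergent fan so that the entire configuration remains inside the disk. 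Linear completeness guarantees that each axis carries a genuine $\omega$-like sequence, while \emph{linear closedness} guarantees that the cross-lines joining $p\in P\setminus\{p_e\}$ to $q_e$ and $q\in Q\setminus\{q_e\}$ to $p_e$ actually meet in $T$, verifying the three defining clauses of a Cartesian frame. Hence $\mathcal{C}\neq\emptyset$ (and in particular $n\geq 2$), Lemma \ref{gridinterpretablelemma} applies, and the reduction above establishes that the monadic $\Pi^1_1$-theory of $(T,\beta)$ is $\Sigma^0_1$-hard.
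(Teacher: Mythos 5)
Your proposal is correct and follows essentially the same route as the paper: the same sentence $\exists X\exists Y(\exists P_t)_{P_t\in S}(\varphi_{\mi{Cf}}\wedge I(\varphi_S))$, the same appeal to Lemmas \ref{uniforminterpretationlemma}, \ref{tilingdefinablelemma}, \ref{boxedcartesianpicturesdefinablelemma} and \ref{gridinterpretablelemma}, the same supergrid-versus-grid observation, and the same reduction from the complement of the tiling problem. The only difference is that you sketch the geometric construction witnessing nonemptiness of the class of Cartesian frames, which the paper simply asserts as a consequence of $T$ extending linearly in $2D$.
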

\begin{proof}
Since $T$ extends linearly in $2D$, we have $n\geq 2$. 
Let $\sigma = \{H,V\}$ be the vocabulary of
supergrids, and let $\tau = \{\beta,X,Y\}$ be the vocabulary of
Cartesian frames.
By Lemma \ref{boxedcartesianpicturesdefinablelemma}, 
there exists a first-order $\tau$-sentence that defines the class of Cartesian frames  with
the domain $T$ with respect to the class of all
expansions of $(T,\beta)$ to
the vocabulary $\tau$. Let $\varphi_{\mi{Cf}}$ denote
such a sentence.
By Lemma \ref{tilingdefinablelemma}, there is a
computable function that associates each input $S$ to the tiling problem with a
first-order $\sigma\cup S$-sentence $\varphi_{S}$
such that a structure $\mf{A}$ of the vocabulary $\sigma$ is $S$-tilable if and only if there is an
expansion $\mf{A}^*$ of the structure
$\mf{A}$ to the vocabulary $\sigma\cup S$ such that 
$\mf{A}^*\models\varphi_{S}$.
Since $T$ extends linearly in $2D$, the class of Cartesian frames with the domain $T$ is nonempty.
By Lemma \ref{gridinterpretablelemma} there is a class of
supergrids $\mathcal{A}$ such that $\mf{G}\in \mathcal{A}$ and $\mathcal{A}$ is
uniformly first-order interpretable in the class of Cartesian frames with the domain $T$.
Therefore there exists a uniform interpretation $I'$ of $\mathcal{A}$ in the class of
Cartesian frames with the domain $T$.
Let $S$ be a finite nonempty set of tiles. Note that $S$ is by 
definition a set of proposition symbols $P_{t}$, where $t$ is a tile type.
Let $I$ be the $S$-expansion of the uniform interpretation $I'$ of $\mathcal{A}$ in the
class of Cartesian frames with the domain $T$.
Define 
$\psi_{S}\, :=\, \exists X\, \exists Y\, (\exists P_t)_{P_t\, \in\, S}
\bigl(\, \varphi_{\mi{Cf}}\, \wedge\, I(\, \varphi_{S}\, )\, \bigr).$
We will prove that for each
input $S$ to the tiling problem, we have $(T,\beta)\models \psi_{S}$ if and only if the
grid $\mf{G}$ is $S$-tilable.
Thereby we establish that there exists a computable reduction from the
\emph{complement problem} of the tiling problem
to the membership problem of the monadic $\Pi_1^1$-theory of $(T,\beta)$.
Since the tiling problem is $\Pi_1^0$-complete,
its complement problem is $\Sigma_1^0$-complete.\footnote{It is of course a
well-known triviality that the complement $\overline{A}$ of a problem $A$ is $\Sigma^0_1$-hard
if $A$ is $\Pi^0_1$-hard. 
Choose an arbitrary problem $B\in \Sigma^0_1$. 
By definition $\overline{B}\in\Pi^0_1$.
By the hardness of $A$, there is a computable 
reduction $f$ such that $x\in\overline{B}\Leftrightarrow f(x)\in A$, 
whence $x\in B\Leftrightarrow f(x)\in \overline{A}$.}
Let $S$ be an input to the tiling problem.
Assume first that there exists an $S$-tiling of the grid $\mf{G}$.
Therefore there exists an expansion $\mf{G}^*$ of the grid $\mf{G}$ to the
vocabulary $\{H,V\}\, \cup\, S$
such that $\mf{G}^*\models\varphi_{S}$.
Hence, by Lemma \ref{uniforminterpretationlemma} and since $\mf{G}\in\mathcal{A}$, there
exists a Cartesian frame $\mf{C}$ with the domain $T$ such that for some
expansion $\mf{C}^*$ of $\mf{C}$ to the vocabulary $\{\beta,X,Y\}\, \cup\, S$,
we have $\mf{C}^*\models I(\varphi_{S})$. On the
other hand, since $\mf{C}$ is a Cartesian frame, we have $\mf{C}^*\models \varphi_{\mi{Cf}}$.
Therefore $\mf{C}^*\models \varphi_{\mi{Cf}}\wedge I(\varphi_{S})$, and hence $(T,\beta)\models \psi_{S}$.
For the converse, assume that $(T,\beta)\models\psi_{S}$.
Therefore there exists an expansion $\mf{B}^*$ of $(T,\beta)$ to the
vocabulary $\{\beta,X,Y\}\, \cup\, S$
such that we have $\mf{B}^*\models \varphi_{\mi{Cf}}\, \wedge\, I(\varphi_{S})$. 
Since $\mf{B}^*\models \varphi_{\mi{Cf}}$, the $\{\beta,X,Y\}$-reduct of $\mf{B}^*$ is a Cartesian frame with the domain $T$. Therefore,
we conclude by Lemma \ref{uniforminterpretationlemma}
that $\mf{A}^*\models\varphi_{S}$ for some expansion $\mf{A}^*$ of some supergrid $\mf{A}\in\mathcal{A}$ to the
vocabulary $\{H,V\}\, \cup\, S$. Thus there exists a supergrid that $S$-tilable.
Hence the grid $\mf{G}$ is $S$-tilable.
\end{proof}
\begin{corollary}
Let $T\subseteq \rsq$ be such that $T$ extends linearly in $2D$. Let $\mathcal{C}$ be the class of
expansions $(T,\beta,(P_i)_{i\in\mathbb{N}})$ of $(T,\beta)$ with arbitrary unary predicates. The first-order theory of $\mathcal{C}$ is undecidable.
\end{corollary}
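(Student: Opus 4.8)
The plan is to derive this immediately from Theorem~\ref{infinitelemma}, after observing that the first-order theory of $\mathcal{C}$ is, up to a trivial computable translation, the same object as the monadic $\Pi_1^1$-theory of $(T,\beta)$. Concretely, fix any first-order sentence $\chi$ whose vocabulary consists of $\{\beta\}$ together with finitely many of the predicates $P_i$, say $P_{i_1},\dots,P_{i_k}$. Since the members of $\mathcal{C}$ are exactly the expansions of $(T,\beta)$ in which each $P_i$ ranges over \emph{all} unary subsets of $T$, the sentence $\chi$ holds in every member of $\mathcal{C}$ if and only if $(T,\beta)\models\forall P_{i_1}\cdots\forall P_{i_k}\,\chi$. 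Thus $\chi$ belongs to the first-order theory of $\mathcal{C}$ exactly when the $\forall\mathrm{MSO}$-sentence $\forall P_{i_1}\cdots\forall P_{i_k}\,\chi$ belongs to the monadic $\Pi_1^1$-theory of $(T,\beta)$. This map and its inverse (strip the monadic prefix) are clearly computable, so the two theories are many-one equivalent.

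Next I would point out that the reduction constructed in the proof of Theorem~\ref{infinitelemma} already lands in this universal fragment, so no new reduction is needed. For an input $S$ to the tiling problem the proof exhibits the $\exists\mathrm{MSO}$-sentence $\psi_S=\exists X\exists Y(\exists P_t)_{P_t\in S}\bigl(\varphi_{\mi{Cf}}\wedge I(\varphi_S)\bigr)$ with $(T,\beta)\models\psi_S$ iff $\mf{G}$ is $S$-tilable. Taking the negation, set $\chi_S:=\neg(\varphi_{\mi{Cf}}\wedge I(\varphi_S))$; since $I$ maps first-order formulae to first-order formulae and $\varphi_{\mi{Cf}},\varphi_S$ are first-order, $\chi_S$ is a first-order sentence over $\{\beta,X,Y\}\cup S$, and $\neg\psi_S$ is precisely its universal monadic closure $\forall X\forall Y(\forall P_t)_{P_t\in S}\,\chi_S$. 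After relabelling $X,Y$ and the tiles as distinct predicates $P_i$, the map $S\mapsto\chi_S$ is computable, and by the equivalence above together with the correctness of the reduction we have: $\chi_S$ lies in the first-order theory of $\mathcal{C}$ iff $(T,\beta)\models\neg\psi_S$ iff $\mf{G}$ is not $S$-tilable, i.e.\ iff $S\in\overline{\mathcal{T}}$.

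Finally I would conclude that $S\mapsto\chi_S$ is a computable reduction from $\overline{\mathcal{T}}$ to the first-order theory of $\mathcal{C}$. Since the tiling problem $\mathcal{T}$ is $\Pi_1^0$-complete, its complement $\overline{\mathcal{T}}$ is $\Sigma_1^0$-complete and in particular undecidable; hence the first-order theory of $\mathcal{C}$ is undecidable as well. There is essentially no obstacle here beyond bookkeeping; the one point that requires care is the \emph{direction} of the correspondence, namely that membership in the first-order theory of a \emph{class} is a validity-style (universal) condition, which is exactly why it matches the universal second-order prefix of the monadic $\Pi_1^1$-theory rather than the satisfiability-style $\exists\mathrm{MSO}$-sentences $\psi_S$ produced directly by the grid interpretation of Lemma~\ref{gridinterpretablelemma}.
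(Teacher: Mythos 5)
Your proof is correct and follows exactly the route the paper intends: the corollary is immediate from Theorem~\ref{infinitelemma} once one notes that a first-order sentence holds in every member of $\mathcal{C}$ iff its universal monadic closure lies in the monadic $\Pi_1^1$-theory of $(T,\beta)$, and the paper's reduction already produces sentences of this shape. Your explicit attention to the validity-versus-satisfiability direction is the only non-trivial bookkeeping, and you handle it correctly.
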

We note that $T$ extending linearly in $1D$ is not a sufficient condition for undecidability of the monadic $\Pi_1^1$-theory of $(T,\beta)$. The monadic $\Pi_1^1$-theory of $(\mathbb{R},\beta)$ is decidable; this follows trivially from the known result that the monadic $\Pi_1^1$-theory $(\mathbb{R},\leq)$ is decidable, see \cite{Burgess}. Also the monadic $\Pi_1^1$-theory of $(\mathbb{Q},\beta)$ is decidable since the $\mathrm{MSO}$ theory of $(\mathbb{Q},\leq)$ is decidable \cite{Rabin:1969}.
\begin{theorem}
Let $n\geq 2$ be an integer. The monadic $\Pi_1^1$-theory of the structure $\rp$ is $\Pi_1^1$-hard.
\end{theorem}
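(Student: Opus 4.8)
The plan is to replay the reduction from the proof of Theorem~\ref{infinitelemma}, with the ordinary tiling problem replaced by the \emph{recurrent} tiling problem and the interpreted (super)grid replaced by the recurrence grid $\mf{R}$. By Harel's theorem the recurrent tiling problem is $\Sigma_1^1$-complete, so its complement $\overline{\mathcal{R}}$ is $\Pi_1^1$-complete, hence $\Pi_1^1$-hard. I would therefore construct, computably and uniformly in an input $(t,T)\in\mathcal{H}$, a monadic $\Pi_1^1$-sentence $\chi_{(t,T)}$ of vocabulary $\{\beta\}$ satisfying
$$ (t,T)\in\overline{\mathcal{R}}\quad\Longleftrightarrow\quad \rp\models\chi_{(t,T)}, $$
so that $\chi_{(t,T)}$ belongs to the monadic $\Pi_1^1$-theory of $\rp$ precisely when $\mf{G}$ is \emph{not} $t$-recurrently $T$-tilable. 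Such a reduction establishes $\Pi_1^1$-hardness of that theory.

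Fix $n\geq 2$. By Lemma~\ref{boxedcartesianpicturesdefinablelemma} there is a first-order sentence $\varphi_{\mi{Cf}}$ defining the Cartesian frames with domain $\rsq$ among all expansions $(\rpm,X,Y)$. By Lemma~\ref{recurrencegridinterpr} the \emph{single} structure $\mf{R}$ is uniformly first-order interpretable in the class of Cartesian frames with domain $\rsq$; fix such an interpretation $I'$, which supplies defining formulae for $H$, $V$ and also for the recurrence relation $R$. By Lemma~\ref{recurrencetilingdefinablelemma} there is a computable map sending $(t,T)$ to a first-order $(\{H,V,R\}\cup T)$-sentence $\varphi_{(t,T)}$ for which $\mf{G}$ is $t$-recurrently $T$-tilable iff some expansion of $\mf{R}$ satisfies $\varphi_{(t,T)}$. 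Taking $I$ to be the $T$-expansion of $I'$, I set
$$ \psi_{(t,T)}\ :=\ \exists X\,\exists Y\,(\exists P_s)_{P_s\in T}\bigl(\varphi_{\mi{Cf}}\wedge I(\varphi_{(t,T)})\bigr), $$
a monadic $\Sigma_1^1$-sentence over $\{\beta\}$, and let $\chi_{(t,T)}:=\neg\psi_{(t,T)}$, which is monadic $\Pi_1^1$.

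The correctness proof then runs exactly as in Theorem~\ref{infinitelemma}. If $\mf{G}$ is $t$-recurrently $T$-tilable, a witnessing expansion of $\mf{R}$ transports through $I'$ via Lemma~\ref{uniforminterpretationlemma} to an expansion of some Cartesian frame with domain $\rsq$ satisfying $\varphi_{\mi{Cf}}\wedge I(\varphi_{(t,T)})$, so $\rp\models\psi_{(t,T)}$. Conversely, if $\rp\models\psi_{(t,T)}$, the $\{\beta,X,Y\}$-reduct of a witnessing expansion is a Cartesian frame, and pulling $I(\varphi_{(t,T)})$ back through $I'$ yields an expansion of $\mf{R}$ modelling $\varphi_{(t,T)}$, i.e.\ a recurrent tiling. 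Hence $\rp\models\psi_{(t,T)}$ iff $(t,T)\in\mathcal{R}$, and the displayed equivalence for $\chi_{(t,T)}$ follows by negation.

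The step that carries the real weight, and the reason the domain is restricted to $\rsq$ rather than an arbitrary $T$ extending linearly in $2D$, is ensuring that $I'$ interprets $\mf{R}$ \emph{itself} and not merely a proper recurrence supergrid. The recurrent tiling problem concerns infinitely many occurrences of $P_t$ in the leftmost column of the \emph{standard} grid $\mf{G}$, a condition that a nonstandard supergrid could falsify by carrying spurious leftmost-column points (and a correspondingly corrupted relation $R$). This is exactly where the second clause of Lemma~\ref{nonstandardmodelsexcluded} is indispensable: over $T=\rsq$, Dedekind completeness forces every $\omega$-like sequence $(P,E)$ to be \emph{isomorphic} to $(\mathbb{N},\mi{succ})$, so the interpreted structure is genuinely $\mf{G}$ and the interpreted recurrence structure genuinely $\mf{R}$. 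Checking that the defining formula for $R$ in $I'$ inherits this faithfulness simultaneously with those for $H$ and $V$ is the one spot requiring care; the rest is the established pattern of Theorem~\ref{infinitelemma}.
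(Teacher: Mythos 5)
Your proposal is correct and follows essentially the same route as the paper: replay the reduction of Theorem~\ref{infinitelemma} using Lemma~\ref{recurrencegridinterpr} to interpret the recurrence grid $\mf{R}$, reduce from the complement of Harel's recurrent tiling problem, and rely on the isomorphism clause of Lemma~\ref{nonstandardmodelsexcluded} to rule out nonstandard supergrids. Your write-up is in fact more explicit than the paper's two-line sketch, and your explanation of why the isomorphism (rather than mere embedding) is indispensable matches the paper's stated reason.
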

\begin{proof}
The proof is essentially the same as the proof of Theorem \ref{infinitelemma}. The main difference is that we use Lemma \ref{recurrencegridinterpr}
and interpret the recurrence grid $\mf{R}$ instead of a class of supergrids and hence obtain a reduction from the recurring tiling problem
instead of the ordinary tiling problem. Thereby we establish $\Pi^1_1$-hardness instead of $\Sigma_1^0$-hardness.
Due to the recurrence condition of the recurrent tiling problem, the result of Lemma \ref{nonstandardmodelsexcluded} that
there is an isomorphism from $(\mathbb{N},\mi{succ})$ to $(P,E)$---rather than an embedding---is essential.
\end{proof}
\begin{corollary}
Let $n\geq 2$ be an integer. Let $\mathcal{C}$ be the class of expansions $(\rpm,(P_i)_{i\in\mathbb{N}})$ of $\rp$
with arbitrary unary predicates. The first-order theory of $\mathcal{C}$ is not on any
level of the arithmetical hierarchy.
\end{corollary}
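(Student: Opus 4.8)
The plan is to derive this corollary from the immediately preceding theorem by packaging the $\mathrm{FO}$-theory of $\mathcal{C}$ and the monadic $\Pi_1^1$-theory of $\rp$ as two presentations of essentially the same object, and then invoking the standard recursion-theoretic fact that a $\Pi_1^1$-hard set cannot be arithmetical. The key observation, already flagged in the introduction, is that a first-order sentence $\varphi$ over a vocabulary $\{\beta,P_{i_1},\dots,P_{i_k}\}$ belongs to $\mathrm{Th}(\mathcal{C})$ exactly when it holds in \emph{every} expansion of $\rp$ by predicates $P_{i_1},\dots,P_{i_k}$, i.e. exactly when $\rp\models\forall P_{i_1}\cdots\forall P_{i_k}\,\varphi$. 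Thus membership in $\mathrm{Th}(\mathcal{C})$ is governed by a universal monadic second-order statement about $\rp$.

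First I would turn this correspondence into a reduction in the direction I need. Given a monadic $\Pi_1^1$ sentence $\Phi=\forall X_1\cdots\forall X_k\,\psi$, where $\psi$ is first-order in $\{\beta,X_1,\dots,X_k\}$, let $\psi^{*}$ be the first-order $\{\beta,P_1,\dots,P_k\}$-sentence obtained by renaming each second-order variable $X_i$ to the unary predicate symbol $P_i$. The map $\Phi\mapsto\psi^{*}$ is plainly computable. By the semantics of universal monadic quantification, $\rp\models\Phi$ holds iff for all choices of subsets $P_1,\dots,P_k\subseteq\rsq$ we have $(\rpm,P_1,\dots,P_k)\models\psi^{*}$, which is precisely the assertion that $\psi^{*}\in\mathrm{Th}(\mathcal{C})$. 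Hence $\Phi\mapsto\psi^{*}$ is a computable many-one reduction of the monadic $\Pi_1^1$-theory of $\rp$ to $\mathrm{Th}(\mathcal{C})$. Since the preceding theorem shows the former to be $\Pi_1^1$-hard, it follows that $\mathrm{Th}(\mathcal{C})$ is $\Pi_1^1$-hard as well.

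It then remains to pass from $\Pi_1^1$-hardness to the conclusion that $\mathrm{Th}(\mathcal{C})$ lies on no level of the arithmetical hierarchy. Every arithmetical set is $\Delta_1^1$, and in particular $\Sigma_1^1$; moreover $\Sigma_1^1$ is closed under computable many-one reductions. So if $\mathrm{Th}(\mathcal{C})$ were arithmetical, then every $\Pi_1^1$ set, reducing to it, would itself be $\Sigma_1^1$, forcing $\Pi_1^1\subseteq\Sigma_1^1$. This contradicts the non-collapse of the analytical hierarchy at the first level, $\Pi_1^1\neq\Sigma_1^1$. Therefore $\mathrm{Th}(\mathcal{C})$ is not arithmetical, as claimed.

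I do not anticipate a genuine obstacle here: the argument is a bookkeeping reduction together with a textbook fact. The only point requiring care is the direction and correctness of the reduction, namely verifying that stripping the universal monadic prefix and reinterpreting the monadic variables as predicate symbols yields exactly the membership condition for $\mathrm{Th}(\mathcal{C})$; this is immediate from the definition of the $\mathrm{FO}$-theory of a class as the set of sentences true in all its members, together with the fact that any first-order sentence mentions only finitely many of the predicates $P_i$. One may additionally note that $\mathrm{Th}(\mathcal{C})$ is itself $\Pi_1^1$ (a universal second-order quantifier over subsets of $\rsq$ in front of an arithmetical matrix), so it is in fact $\Pi_1^1$-complete, although only hardness is needed for the stated corollary.
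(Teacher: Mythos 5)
Your proposal is correct and matches the paper's intent: the corollary is left as immediate because, as noted in the introduction, the $\mathrm{FO}$-theory of $\mathcal{C}$ and the monadic $\Pi_1^1$-theory of $\rp$ are essentially the same object, related exactly by the prefix-stripping reduction you describe. Your explicit verification of that reduction and the standard fact that a $\Pi_1^1$-hard set cannot be arithmetical is precisely the argument the authors are relying on.
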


\section{Geometric structures $(T,\beta)$ with an undecidable weak monadic $\Pi_1^1$-theory}\label{section5}

In this section we prove that the
weak universal monadic second-order theory of any structure $(T,\beta)$ such that $T$ extends linearly in $2D$ is undecidable.
In fact, we show that any such theory is $\Pi^0_1$-hard. We
establish this by a reduction from the periodic tiling problem to the problem of deciding truth of $\{\beta\}$-sentences of
weak monadic $\Sigma^1_1$ in $(T,\beta)$.
The argument is based on interpreting tori in $(T,\beta)$.
Most notions used in this section are inherited either directly or with minor modification from Section \ref{main}.

Let $Q$ be a subset of  $T\subseteq\rsq$. We say that $Q$ is a \emph{finite sequence} in $T$ if $Q$ is a finite
nonempty set and the points in $Q$ are all collinear.

\begin{definition}
Let $T\subseteq\rsq$ and let $\beta$ be the corresponding betweenness relation.
 Let $P$ and $Q$ be finite sequences in $T$ such that the following
conditions hold.
\begin{enumerate}
\item
$P\cap Q=\{a_0\}$, where $a_0$ is a zero point of both $P$ and $Q$.
\item $P$ and $Q$ are non-singleton sequences.
\item There exists lines $L_P,L_Q$ in $T$ such that $L_P\neq L_Q$, $P\subseteq L_P$ and $Q\subseteq L_Q$.
\end{enumerate}
We call the structure $(T,\beta,P,Q)$ \emph{a finite Cartesian frame} with the domain $T$. The unique intersection point of $P$ and $Q$ is called the \emph{origo} of the
frame. If $\lvert P\rvert=m+1$ and $\lvert Q\rvert=n+1$,
we call $(T,\beta,P,Q)$ an \emph{$m\times n$ Cartesian frame} with the domain $T$.
\end{definition}

\begin{lemma}\label{finitedefinable}
Let $T\subseteq \rsq$, $n\geq 2$.  Let $\mathcal{C}$ be the class of all expansions $(T,\beta,P,Q)$ of $(T,\beta)$ by finite
unary relations $P$ and $Q$. The class of finite
Cartesian frames with the domain $T$ is definable with respect to $\mathcal{C}$ by a first-order sentence.
\end{lemma}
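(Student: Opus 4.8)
The plan is to exploit the fact that every structure in $\mathcal{C}$ already has \emph{finite} $P$ and $Q$, so I need not express finiteness at all: each remaining clause of the definition of a finite Cartesian frame is a first-order property of the expansion $(T,\beta,P,Q)$, and the defining sentence will simply be the conjunction of one first-order $\{\beta,P,Q\}$-formula per clause. This is in contrast with Lemma~\ref{boxedcartesianpicturesdefinablelemma}, where $\omega$-like definability (Lemma~\ref{omegalikesequencedefinability}) was invoked to pin down the intended infinite sequences; here collinearity of a finite set already makes it a finite sequence, so no such machinery is needed, and the whole statement should come out ``straightforward'' once the line-existence point below is settled.

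Concretely, I would first transcribe the sequence and zero-point notions. Writing
\begin{align*}
\mi{seq}(P) &:= \exists x\, Px \wedge \exists x\exists y\,(Px\wedge Py\wedge x\neq y)\\
&\quad \wedge\, \forall x\forall y\forall z\,(Px\wedge Py\wedge Pz\rightarrow \mi{collinear}(x,y,z)),\\
\mi{zero}_P(x) &:= Px\wedge \neg\exists u\exists v\,(Pu\wedge Pv\wedge u\neq x\wedge v\neq x\wedge \beta(u,x,v)),
\end{align*}
(and symmetrically $\mi{seq}(Q)$, $\mi{zero}_Q(x)$), the formula $\mi{seq}(P)$ states, within $\mathcal{C}$, that $P$ is a non-singleton finite sequence (clause~2), while $\mi{zero}_P(x)$ transcribes Definition~\ref{sequencewithzero}. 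Clause~1, that $P$ and $Q$ share exactly one point and that this point is a common zero-point, and clause~3, that the two spanning lines are distinct, I would render as
\begin{align*}
\mi{origo} &:= \exists x\,(Px\wedge Qx\wedge \mi{zero}_P(x)\wedge \mi{zero}_Q(x))\\
&\quad \wedge\, \forall x\forall y\,(Px\wedge Qx\wedge Py\wedge Qy\rightarrow x=y),\\
\mi{spread} &:= \exists x\exists p\exists q\,\bigl(Px\wedge Qx\wedge Pp\wedge Qq\wedge p\neq x\wedge q\neq x\wedge \neg\mi{collinear}(x,p,q)\bigr).
\end{align*}
The desired defining sentence is then $\mi{seq}(P)\wedge \mi{seq}(Q)\wedge \mi{origo}\wedge \mi{spread}$.

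The step I expect to require the most care is justifying that clause~3 really reduces to $\mi{spread}$. The point is that a finite collinear set with at least two distinct points lies on a single real line of $\rsq$, and the line in $T$ determined by any two of its distinct points is exactly the intersection of that real line with $T$; hence a line $L_P$ in $T$ with $P\subseteq L_P$ (and likewise $L_Q\supseteq Q$) \emph{always} exists, so the only genuine content of clause~3 is $L_P\neq L_Q$. Since $\mi{origo}$ forces $P$ and $Q$ to meet in the unique common zero-point $a_0=x$, the lines $L_P$ and $L_Q$ coincide precisely when every point of $P\cup Q$ is collinear with $a_0$; thus $L_P\neq L_Q$ is equivalent to the existence of $p\in P\setminus\{a_0\}$ and $q\in Q\setminus\{a_0\}$ with $\neg\mi{collinear}(a_0,p,q)$, which is exactly what $\mi{spread}$ asserts. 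The two implications (a finite Cartesian frame satisfies the conjunction, and a model of the conjunction is a finite Cartesian frame) then follow clause by clause, with finiteness inherited from $\mathcal{C}$ and the line-existence handled as above, so no further argument is needed.
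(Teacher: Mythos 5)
Your proof is correct and is exactly the ``straightforward'' argument the paper leaves implicit: since finiteness is built into the class $\mathcal{C}$, one only transcribes non-singletonhood, collinearity, the unique common zero-point, and non-collinearity of the two arms, and your observation that lines in $T$ through two distinct points exist uniquely (as $T\cap L'$ for the spanned real line $L'$) correctly disposes of the line-existence content of clause~3. Nothing is missing.
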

\begin{proof}
Straightforward.
\end{proof}

\begin{lemma}\label{torusinterpretablelemma}
Let $T\subseteq\mathbb{R}^n$, $n\geq 2$. Assume that $T$ extends linearly in $2D$.
The class of tori is uniformly first-order interpretable in the class of finite Cartesian frames with the domain $T$.
\end{lemma}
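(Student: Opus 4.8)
The plan is to mirror the construction in Lemma \ref{gridinterpretablelemma}, replacing the $\omega$-like sequences by the finite sequences of a finite Cartesian frame and then ``wrapping'' the interpreted grid so that its borders identify, thereby producing a torus. Given a finite $m\times n$ Cartesian frame $\mf{C}=(T,\beta,P,Q)$ with origo $a_0$, let $p_e$ and $q_e$ be the two extremal points of $P$ and $Q$ respectively (the unique non-origo points $x$ admitting no $s,t$ in the relevant sequence with $\beta^*(s,x,t)$). Exactly as before, I would let $\varphi_{\mi{Dom}}(u)$ pick out the intersection points of the lines connecting $P\setminus\{p_e\}$ to $q_e$ with the lines connecting $Q\setminus\{q_e\}$ to $p_e$, together with the boundary points in $P$ and $Q$. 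Since $T$ extends linearly in $2D$, the linear-closure condition guarantees these intersection points exist in $T$, so the domain is exactly an $m\times n$ array of points. The horizontal and vertical adjacency formulae $\varphi_H,\varphi_V$ are reused verbatim from Lemma \ref{gridinterpretablelemma}, using the $(T\setminus\{\text{domain}\})$-connectedness trick to capture the successor step.

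The genuinely new ingredient is the wrap-around. I would add two further first-order formulae $\varphi_H^{\mathit{wrap}}$ and $\varphi_V^{\mathit{wrap}}$ defining the torus edges $H_{m,n}\setminus H$ and $V_{m,n}\setminus V$, namely the edges from the last column/row back to the first. The east-to-west wrap relates a point $u$ in the easternmost column to the point $v$ in the westernmost column on the same horizontal line; the point $v$ is characterized as lying on $Q$ (the ``vertical axis'') and being horizontally aligned with $u$, while $u$ is characterized as being extremal in its row, i.e. as having no domain point properly beyond it in the horizontal direction away from $Q$. Concretely one states that $v\in Q$, that $u,v$ and the corresponding axis points are appropriately collinear/parallel, and that $u$ is the last domain point of its row. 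The vertical wrap is defined symmetrically with the roles of $P$ and $Q$ interchanged. I then set $H^{\mf{T}}:=\varphi_H\vee\varphi_H^{\mathit{wrap}}$ and $V^{\mf{T}}:=\varphi_V\vee\varphi_V^{\mathit{wrap}}$ as the interpreting formulae.

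The remaining step is to verify that the interpreted structure is isomorphic to the torus $\mf{T}_{m,n}$. Using Lemma \ref{nonstandardmodelsexcluded} (or rather its finite analogue, which follows by the same argument restricted to a finite $\omega$-like-with-endpoint sequence), the origo maps to $(0,0)$, the successor relation along $P$ and along $Q$ induces coordinates $(i,j)$ with $0\le i<m$, $0\le j<n$, and the bijection $f$ sending a grid point to its interpreted intersection point is established exactly as in Lemma \ref{gridinterpretablelemma}. It then suffices to check that under $f$ the ordinary edges match $H$ and $V$ restricted to the array and the wrap edges match the added torus edges, which is immediate from the definitions of $H_{m,n}$, $V_{m,n}$. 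Since every $m\times n$ Cartesian frame yields $\mf{T}_{m,n}$ and conversely every torus $\mf{T}_{m,n}$ arises from some such frame (because $T$ extends linearly in $2D$, one can embed an arbitrarily fine $m\times n$ grid inside the $\epsilon$-ball witnessing the linear extension), the class of tori is uniformly first-order interpretable in the class of finite Cartesian frames.

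The main obstacle I expect is in correctly formalizing the wrap-around edges in first-order logic over $\beta$ alone: one must single out the extremal column and row (which is definable via the endpoints $p_e,q_e$ and the non-existence of a further domain point in the outward direction) and then express ``same horizontal line'' and ``same vertical line'' purely through betweenness and parallelism, using the $\mi{parallel}$ and collinearity machinery already developed. Care is also needed to ensure the formulae remain correct at the corner point, where both a horizontal and a vertical wrap originate, and to confirm that the linear-closure of the $2D$-extending set $T$ supplies all required intersection points so that the interpreted domain is exactly $m\times n$ with no missing or spurious points.
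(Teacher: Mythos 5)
Your proposal matches the paper's proof essentially step for step: reuse the domain and adjacency formulae from Lemma \ref{gridinterpretablelemma}, and add wrap-around disjuncts that send the extremal point $u$ of a row (characterized by $\forall x(\beta^*(u,x,p_e)\rightarrow\neg\varphi_{\mi{Dom}}(x))$) back to the axis point $v\in Q$ on the same line through $p_e$, and symmetrically for columns, with surjectivity onto all tori $\mf{T}_{m,n}$ secured by the existence of $m\times n$ frames of every size inside the $2D$-extending region. The paper's wrap formula is exactly $\varphi_H(u,v)\vee\bigl(Qv\wedge\beta(v,u,p_e)\wedge\forall x(\beta^*(u,x,p_e)\rightarrow\neg\varphi_{\mi{Dom}}(x))\bigr)$, so the "same row" condition you worried about is handled directly by betweenness with the endpoint $p_e$ rather than needing the parallelism machinery.
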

\begin{proof}
Let $\mf{C}=(T,\beta,P,Q)$ be a finite Cartesian frame.
We denote by $p_e\in P$ and $q_e\in Q$ the limit points of $P$ and $Q$, respectively. Clearly $p_e$ and $q_e$ are definable by a first-order formula with one free variable.

Define $\varphi^{\mi{fin}}_{\mi{Dom}}(u)\, :=\, \varphi_{\mi{Dom}}(u)$. Also
define the following variants of the $\{\beta,P,Q\}$-formulas $\varphi_H(u,v)$ and $\varphi_V(u,v)$
definined in Lemma \ref{gridinterpretablelemma}. Let
\begin{align*}
\varphi^{\mi{fin}}_H&(u,v):=\varphi_H(u,v)
\vee\Big(Qv\wedge\beta(v,u,p_e)\wedge
\forall x\big(\beta^*(u,x,p_e)\rightarrow\neg\varphi_{\mi{Dom}}^{\mi{fin}}(x)\big)\Big),\\
\varphi^{\mi{fin}}_V&(u,v):=\varphi_V(u,v)
\vee\Big(Pv\wedge\beta(v,u,q_e)\wedge
\forall x\big(\beta^*(u,x,q_e)\rightarrow\neg\varphi_{\mi{Dom}}^{\mi{fin}}(x)\big)\Big).
\end{align*}
Let $F_\mf{C}:=\{r\in T \mid \mf{C}\models \varphi^{\mi{fin}}_{Dom}(r)\}$. Define the structure $\mf{F}_\mf{C}=(F_\mf{C},H^{\mf{F}_\mf{C}},V^{\mf{T}_\mf{C}})$, where
\begin{align*}
H^{\mf{T}_\mf{C}}&:=\{(s,t)\in F_\mf{C}\times F_\mf{C}\mid \mf{C}\models \varphi^{\mi{fin}}_H(s,t)\}\text{ and}\\
V^{\mf{T}_\mf{C}}&:=\{(s,t)\in F_\mf{C}\times F_\mf{C}\mid \mf{C}\models \varphi^{\mi{fin}}_V(s,t)\}.
\end{align*}
It is straightforward to check that if $\mf{C}$ is an $m\times n$ Cartesian frame, then there exists a bijection $f$ from the domain of the $m\times n$ torus $\mf{T}_{m,n}=(T_{m,n},H_{m,n},V_{m,n})$ to $F_\mf{C}$ such that the following conditions hold for all $u,v\in T_{m,n}$.
\begin{enumerate}
\item $(u,v)\in H_{m,n}\Leftrightarrow \varphi^{\mi{fin}}_H(f(u),f(v))$,
\vspace{1pt}
\item $(u,v)\in V_{m,n}\Leftrightarrow \varphi^{\mi{fin}}_V(f(u),f(v))$.
\end{enumerate}
Notice that since $T$ extends linearly in $2D$, there exist finite Cartesian frames of all sizes in the class of finite Cartesian frames with the domain $T$. Hence the class of finite tori is uniformly first-order interpretable in the class of finite Cartesian frames with the domain $T$.
\end{proof}
\begin{theorem}
Let $T\subseteq \rsq$ and let $\beta$ be the corresponding betweenness relation.
Assume that $T$ extends linearly in $2D$. The
weak monadic $\Pi_1^1$-theory of $(T,\beta)$ is $\Pi^0_1$-hard.
\end{theorem}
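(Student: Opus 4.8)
The plan is to follow the proof of Theorem~\ref{infinitelemma} almost verbatim, replacing the ordinary tiling problem by the periodic tiling problem, Cartesian frames by finite Cartesian frames, and the interpretation of supergrids by the interpretation of tori supplied by Lemma~\ref{torusinterpretablelemma}. Since $T$ extends linearly in $2D$ we again have $n\geq 2$. Let $\sigma=\{H,V\}$ be the vocabulary of tori and $\tau=\{\beta,X,Y\}$ the vocabulary of finite Cartesian frames. By Lemma~\ref{finitedefinable} there is a first-order $\tau$-sentence $\varphi_{\mi{Cf}}^{\mi{fin}}$ defining the class of finite Cartesian frames with the domain $T$ with respect to all expansions of $(T,\beta)$ by finite unary predicates $X,Y$.

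Next I would fix, for each finite nonempty set of tiles $S$, the first-order $\sigma\cup S$-sentence $\varphi_S$ given by Lemma~\ref{tilingdefinablelemma}, so that a torus $\mf{D}$ is $S$-tilable iff some expansion of $\mf{D}$ to $\sigma\cup S$ models $\varphi_S$. By Lemma~\ref{torusinterpretablelemma} the class of tori is uniformly first-order interpretable in the class of finite Cartesian frames with the domain $T$; let $I'$ be such an interpretation and $I$ its $S$-expansion. I then set
\[
\psi_S\ :=\ \exists X\,\exists Y\,(\exists P_t)_{P_t\in S}\bigl(\varphi_{\mi{Cf}}^{\mi{fin}}\wedge I(\varphi_S)\bigr).
\]
The crucial observation is that $\psi_S$ is a sentence of \emph{weak} monadic $\Sigma_1^1$: in any witnessing finite Cartesian frame the predicates $X,Y$ are finite, the interpreted torus has finite domain, and hence the witnessing tile predicates $P_t$ may be taken finite as well. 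Consequently $\neg\psi_S$ is a weak monadic $\Pi_1^1$-sentence.

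Then I would verify that $(T,\beta)\models\psi_S$ iff $S\in\mathcal{S}$, i.e., iff some torus is $S$-tilable, exactly as in Theorem~\ref{infinitelemma}. For the forward direction, an $S$-tiling of some torus $\mf{D}$ yields an expansion $\mf{D}^*\models\varphi_S$; since $T$ extends linearly in $2D$, finite Cartesian frames of every size exist (Lemma~\ref{torusinterpretablelemma}), so one may pick a finite Cartesian frame $\mf{C}$ whose interpreted torus is $\mf{D}$, and Lemma~\ref{uniforminterpretationlemma} produces a finite expansion $\mf{C}^*\models\varphi_{\mi{Cf}}^{\mi{fin}}\wedge I(\varphi_S)$, whence $(T,\beta)\models\psi_S$. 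For the converse, a finite expansion $\mf{B}^*\models\varphi_{\mi{Cf}}^{\mi{fin}}\wedge I(\varphi_S)$ has as its $\tau$-reduct a finite Cartesian frame, and Lemma~\ref{uniforminterpretationlemma} then gives an $S$-tiling of the interpreted torus, so $S\in\mathcal{S}$.

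Finally I would pass to complements. By Theorem~\ref{periodictilingcomplete} the periodic tiling problem $\mathcal{S}$ is $\Sigma^0_1$-complete, so its complement $\overline{\mathcal{S}}$ is $\Pi^0_1$-complete. The map $S\mapsto\neg\psi_S$ is computable, and $S\in\overline{\mathcal{S}}$ iff $(T,\beta)\not\models\psi_S$ iff $(T,\beta)\models\neg\psi_S$, i.e., iff the weak monadic $\Pi_1^1$-sentence $\neg\psi_S$ lies in the weak monadic $\Pi_1^1$-theory of $(T,\beta)$. This reduces the $\Pi^0_1$-complete problem $\overline{\mathcal{S}}$ to that theory, establishing $\Pi^0_1$-hardness. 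The only real subtlety---and the step I would be most careful about---is the finiteness bookkeeping: one must check that every existentially quantified relation in $\psi_S$ can be witnessed by a finite set, so that the reduction genuinely targets the \emph{weak} (rather than full) monadic $\Pi_1^1$-theory. This is precisely where the interpretation of \emph{tori} (finite structures) in \emph{finite} Cartesian frames, rather than of supergrids in infinite frames, is essential.
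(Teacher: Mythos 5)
Your proposal is correct and follows essentially the same route as the paper's own proof: the same sentence $\exists X\exists Y(\exists P_t)_{P_t\in S}(\varphi_{\mi{fcf}}\wedge J(\varphi_S))$, the same use of Lemmas \ref{finitedefinable}, \ref{tilingdefinablelemma}, \ref{torusinterpretablelemma} and \ref{uniforminterpretationlemma}, and the same reduction from the periodic tiling problem followed by complementation to obtain $\Pi^0_1$-hardness. Your explicit attention to the finiteness of the witnessing predicates (so that the reduction targets the \emph{weak} theory) is exactly the point the paper's construction is designed to secure.
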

\begin{proof}
Since $T$ extends linearly in $2D$, we have $n\geq 2$.
Let $\sigma=\{H,V\}$ be the vocabulary of tori, and let $\tau=\{\beta,X,Y\}$ be the vocabulary of finite Cartesian frames.
Let $C=\{\, (T,\beta,X,Y)\mid X\text{ and }Y \text{ are finite sets, } X,Y\subseteq T\, \}$.
By Lemma \ref{finitedefinable},
there exists a first-order $\tau$-sentence that
defines the class of finite Cartesian frames with the domain $T$ with respect to the
class $C$. Let $\varphi_{\mi{fcf}}$ denote such a sentence.
By Lemma \ref{tilingdefinablelemma}, every input $S$ to the periodic tiling
problem can be effectively associated
with a first-order $\sigma\cup S$-sentence $\varphi_S$ such that
for all tori $\mf{B}$, the torus $\mf{B}$ is $S$-tilable iff there is an
expansion $\mf{B}^*$ of $\mf{B}$ to the
vocabulary $\sigma\cup S$
such that $\mf{B}^*\models\varphi_{S}$.
By Lemma \ref{torusinterpretablelemma}, the class of tori is uniformly first-order interpretable in the class of
finite Cartesian frames with the domain $T$.
Let $S$ be a finite nonempty set of tiles and let $J$ be the $S$-expansion of the uniform interpretation of the class of tori in the class of finite Cartesian frames
 with the domain $T$. Let $\phi_S$ denote the following monadic $\Sigma^1_1$-sentence.
\[
\exists X\exists Y(\exists P_t)_{P_t\in S}(\varphi_{\mi{fcf}}\wedge J(\varphi_S)).
\]
We will show that $(T,\beta)\models \phi_S$ if and only if there exists an $S$-tilable torus $\mf{D}$.
First assume that there is an $S$-tilable torus $\mf{D}$.
Therefore, by Lemma \ref{tilingdefinablelemma}, there is an expansion $\mf{D}^*$ of $\mf{D}$ to the vocabulary $\sigma\cup S$ such that
$\mf{D}^*\models \varphi_S$. Since the class of tori is $J$-interpretable in the class of finite Cartesian frames with the domain $T$ and $\mf{D}^*\models
\varphi_S$, it follows by Lemma \ref{uniforminterpretationlemma} that there is a finite Cartesian frame $\mf{C}$ with the domain $T$ and an expansion
$\mf{C}^*$ of $\mf{C}$ to the vocabulary $\tau\cup T$ such that $\mf{C}^*\models J(\varphi_S)$.
Therefore $\mf{C}\models (\exists P_t)_{P_t\in S} J(\varphi_S)$. Since there exists a finite Cartesian frame with 
the domain $T$ that satisfies $(\exists P_t)_{P_t\in S}J(\varphi_S)$, we can conclude that
\[
(T,\beta)\models \exists X\exists Y(\exists P_t)_{P_t\in S}(\varphi_{\mi{fcf}}\wedge J(\varphi_S)).
\]
If, on the other hand, it holds that
\[
(T,\beta)\models \exists X\exists Y(\exists P_t)_{P_t\in S}((\varphi_{\mi{fcf}}\wedge J(\varphi_S)),
\]
it follows that there is a finite Cartesian frame $\mf{C}$ with the domain $T$ such that $\mf{C}\models(\exists P_t)_{P_t\in S} J(\varphi_S)$.
Therefore there exists an expansion $\mf{C}^*$ of $\mf{C}$ to the vocabulary $\tau\cup T$ such that $\mf{C}^*\models J(\varphi_T)$. Since the
class of tori is uniformly $J$-interpretable in the class of finite Cartesian frames with
the domain $T$ and $\mf{C}^*\models J(\varphi_S)$, there is by Lemma
\ref{uniforminterpretationlemma} an expansion $\mf{D}^*$ of a torus $\mf{D}$ to the vocabulary $\sigma\cup S$ such
that $\mf{D}^*\models \varphi_S$. Now by Lemma \ref{tilingdefinablelemma}, $\mf{D}$ is $S$-tilable.
Hence there is a torus which is $S$-tilable.
We have now shown that for any finite set of tiles $S$ it holds that there is a torus which is $S$-tilable
if and only if $(T,\beta)\models \phi_S$. Hence we have
reduced the periodic tiling problem to the problem of deciding truth of $\{\beta\}$-sentences of weak monadic $\Sigma^1_1$ in $(T,\beta)$. From the
$\Sigma^0_1$-completeness of the periodic tiling problem (Theorem \ref{periodictilingcomplete}), we conclude that
the weak monadic $\Sigma_1^1$-theory of the
structure $(T,\beta)$ is $\Sigma^0_1$-hard. Therefore the membership problem of the weak monadic $\Pi_1^1$-theory of the
structure $(T,\beta)$, is $\Pi^0_1$-hard.
\end{proof}
\begin{corollary}
Let $T\subseteq \rsq$ be a set such that $T$ extends linearly in $2D$. Let $\mathcal{C}$ be the class of
expansions $(T,\beta,(P_i)_{i\in\mathbb{N}})$ of $(T,\beta)$ with finite unary
predicates. The first-order theory of $\mathcal{C}$ is undecidable.
\end{corollary}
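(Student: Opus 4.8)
The plan is to observe that the first-order theory of the class $\mathcal{C}$ coincides, up to a trivial computable translation, with the weak monadic $\Pi_1^1$-theory of $(T,\beta)$ shown $\Pi_1^0$-hard in the preceding theorem. Concretely, a first-order $(\{\beta\}\cup\{P_{i_1},\dots,P_{i_k}\})$-sentence $\varphi$ belongs to the first-order theory of $\mathcal{C}$ exactly when it holds in every expansion $(T,\beta,(P_i)_{i\in\mathbb{N}})$ with finite unary predicates. Since any such $\varphi$ mentions only finitely many of the $P_i$, this is equivalent to requiring that $(T,\beta,P_{i_1},\dots,P_{i_k})\models\varphi$ for all finite sets $P_{i_1},\dots,P_{i_k}\subseteq T$, which is precisely the assertion that the weak monadic $\Pi_1^1$-sentence $\forall P_{i_1}\cdots\forall P_{i_k}\,\varphi$ holds in $(T,\beta)$ under the weak (finite-set) semantics.

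First I would make this correspondence explicit as a pair of computable maps. Sending a first-order sentence $\varphi$ over $\{\beta\}\cup\{P_{i_1},\dots,P_{i_k}\}$ to its weak universal closure $\forall P_{i_1}\cdots\forall P_{i_k}\,\varphi$ gives a computable injection from first-order sentences into weak monadic $\Pi_1^1$-sentences; conversely, stripping the leading universal set-quantifiers from a weak $\Pi_1^1$-sentence recovers a first-order sentence over the expanded vocabulary. Under either direction, membership in the first-order theory of $\mathcal{C}$ is equivalent to membership in the weak monadic $\Pi_1^1$-theory of $(T,\beta)$, by the semantic equivalence just noted: the weak set-quantifiers range exactly over the finite subsets of $T$, which are precisely the admissible interpretations of the predicates in the structures of $\mathcal{C}$.

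Finally I would invoke the preceding theorem. Since $T$ extends linearly in $2D$, the weak monadic $\Pi_1^1$-theory of $(T,\beta)$ is $\Pi_1^0$-hard, and hence undecidable. As the two theories are computably many-one equivalent via the maps above, undecidability transfers, so the first-order theory of $\mathcal{C}$ is undecidable as well. I do not expect any genuine obstacle here; the single point requiring care is to confirm that ``expansions with finite unary predicates'' and ``weak (finite) monadic set-quantification'' describe the same class of interpretations, so that the weak universal closure of a first-order sentence is true in $(T,\beta)$ iff that sentence holds throughout $\mathcal{C}$. This is immediate from the definition of $\mathcal{C}$ and of $\mathrm{WMSO}$, and the translation is patently effective, so the reduction is computable.
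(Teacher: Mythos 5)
Your argument is correct and is exactly the intended (implicit) justification of this corollary: the paper states it without proof as an immediate consequence of the preceding theorem, relying on precisely the correspondence you spell out between the first-order theory of the class of expansions by finite unary predicates and the weak monadic $\Pi_1^1$-theory of $(T,\beta)$ (a reading the authors themselves note in the introduction). The only point worth checking, which you do address, is that weak $\Pi_1^1$-sentences are by definition in prenex form $\forall P_{i_1}\cdots\forall P_{i_k}\,\varphi$ with $\varphi$ first-order, so stripping and re-attaching the set-quantifier prefix is a computable many-one reduction in both directions.
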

\section{Conclusions}
We have studied first-order theories of geometric structures $(T,\beta)$, $T\subseteq\rsq$, expanded with (finite) unary predicates.
We have established that for $n\geq 2$, the first-order theory of the class of all expansions of $\rp$ with arbitrary unary predicates is
highly undecidable ($\Pi_1^1$-hard). This refutes a conjecture from the article \cite{vanBenthemAiello:2002} of Aiello and van Benthem.
In addition, we have established the following for any geometric structure $(T,\beta)$ that extends linearly in $2D$.
\begin{enumerate}
\item
The first-order theory of the class of expansions of $(T,\beta)$ with arbitary unary predicates is $\Sigma_1^0$-hard.
\item
The first-order theory of the class of expansions of $(T,\beta)$ with finite unary predicates is $\Pi_1^0$-hard.
\end{enumerate}
Geometric structures that extend linearly in $2D$ include, for example, the rational plane $(\mathbb{Q}^2,\beta)$ and the real unit
rectangle $([0,1]^2,\beta)$, to name a few.
The techniques used in the proofs can be easily modified to yield undecidability of first-order theories of a significant variety of natural
restricted expansion classes of the affine real plane $(\mathbb{R}^2,\beta)$, such as those with unary predicates denoting polygons, finite unions of closed rectangles, and real algebraic sets, for example. Such classes could be interesting from the point of view of applications.

In addition to studying issues of decidability, we briefly compared the expressivities of universal monadic second-order logic and weak universal monadic second-order logic. While the two are incomparable in general, we established that over any class of expansions of $\rp$, it is no longer the case. We showed that finiteness of a unary predicate is definable by a first-order sentence, and hence obtained translations from $\forall\mathrm{WMSO}$ into $\forall\mathrm{MSO}$ and from $\mathrm{WMSO}$ into $\mathrm{MSO}$.

Our original objective to study weak monadic second order logic over $\rp$ was to identify decidable logics of space with distinguished regions.
Due to the ubiquitous applicability of the tiling methods, this pursuit gave way to identifying several undecidable theories of geometry.
Hence we shall look elsewhere in order to identify well behaved natural decidable logics of space. Possible interesting directions include considering natural fragments of first-order logic over expansions of $\rp$, and also other geometries.
Related results could provide insight, for example, in the background theory of modal spatial logics.





\end{document}